\newtheorem{thm}{Theorem}[section]
\newtheorem{lem}[thm]{Lemma}
\newtheorem{prop}[thm]{Proposition}
\theoremstyle{definition}
\newtheorem{defn}[thm]{Definition}
\theoremstyle{remark}
\numberwithin{equation}{section}
\newcommand{\norm}[1]{\left\Vert#1\right\Vert}
\newcommand{\abs}[1]{\left\vert#1\right\vert}
\newcommand{\set}[1]{\left\{#1\right\}}
\newcommand{\To}{\longrightarrow}
\begin{document}
\setcounter{tocdepth}{1}


\title[The free Banach lattice generated by a lattice]{The free Banach lattice generated by a lattice}
\author[A.\ Avil\'es]{Antonio Avil\'es}
\address{Universidad de Murcia, Departamento de Matem\'{a}ticas, Campus de Espinardo 30100 Murcia, Spain.}
\email{avileslo@um.es}

\author[J.D. Rodr\'iguez Abell\'an]{Jos\'e David Rodr\'iguez Abell\'an}
\address{Universidad de Murcia, Departamento de Matem\'{a}ticas, Campus de Espinardo 30100 Murcia, Spain.}
\email{josedavid.rodriguez@um.es}

\thanks{Authors supported by projects MTM2014-54182-P and MTM2017-86182-P (Government of Spain, AEI/FEDER, EU) and by Fundaci\'on S\'eneca: project 19275/PI/14 for the first author and FPI contract for the second author.}

\keywords{Free Banach lattice; lattice; linear order; countable chain condition}

\subjclass[2010]{46B43, 06BXX}

\begin{abstract}
We introduce the free Banach lattice generated by a lattice $\mathbb{L}$. We give an explicit description of it and we study some of its properties for the case when $\mathbb{L}$ is a linear order, like the countable chain condition. 
\end{abstract}

\maketitle

\setlength{\parskip}{4mm}

\emph{Remark: This version differs from the published version in that it contains a new proof of Lemma 5.3. The previous proof worked for the particular case in which the lemma was applied, but it was wrong for the lemma as stated in general.}

\section{Introduction}

The purpose of this paper is to introduce the free Banach lattice generated by a lattice and investigate some of its properties. The free Banach lattice generated by a set $A$ with no extra structure, which is denoted by $FBL(A)$, has been recently introduced and analyzed by B. de Pagter and A.W. Wickstead in \cite{dPW15}, while the free Banach lattice generated by a Banach space $E$ has been studied by A. Avil\'{e}s, J. Rodr\'{i}guez and P. Tradacete in \cite{ART18}.

If $A$ is a set with no extra structure, $FBL(A)$ is a Banach lattice together with a bounded map $u : A \longrightarrow FBL(A)$ having the following universal property: for every Banach lattice $Y$ and every bounded map $v : A \longrightarrow Y$ there is a unique Banach lattice homomorphism $S : FBL(A) \longrightarrow Y$ such that $S \circ u = v$ and $\norm{S} = \sup \set{\norm{v(a)} : a \in A}$. The same idea is applied by A. Avil\'{e}s, J. Rodr\'{i}guez and P. Tradacete to define the concept of the free Banach lattice generated by a Banach space $E$, $FBL[E]$. This is a Banach lattice together with a bounded operator $u:E\To FBL[E]$  such that for every Banach lattice $Y$ and every bounded operator $T : E \longrightarrow Y$ there is a unique Banach lattice homomorphism $S : FBL[E] \longrightarrow Y$ such that $S \circ u = T$ and $\norm{S} = \norm{T}$.

We can consider a similar idea using lattices instead of Banach spaces. Remember that a lattice is a set $\mathbb{L}$ together with two operations $\wedge$ and $\vee$ that are the infimum and supremum of some partial order relation on $\mathbb{L}$, and a lattice homomorphism is a function between lattices that commutes with the two operations. 

\begin{defn} \label{FBLGBL} Given a lattice $\mathbb{L}$, the \textit{free Banach lattice generated by $\mathbb{L}$} is a Banach lattice $F$ together with a lattice homomorphism $\phi: \mathbb{L} \longrightarrow F$ such that for every Banach lattice $X$ and every bounded lattice homomorphism $T: \mathbb{L} \longrightarrow X$, there exists a unique Banach lattice homomorphism $\hat{T}: F \longrightarrow X$ such that $|| \hat{T} || = || T ||$ and makes the following diagram commutative, that is to say, $T = \hat{T} \circ \phi$.

$$\xymatrix{\mathbb{L}\ar_{\phi}[d]\ar[rr]^T&&X\\
F\ar_{\hat{T}}[urr]&& }$$

\end{defn}

Here, the norm of $T$ is $\norm{T} := \sup \set{\norm{T(x)}_X : x \in \mathbb{L}}$, while the norm of $\hat{T}$ is the usual for Banach spaces.

This definition determines a Banach lattice that we denote by $FBL\langle \mathbb{L}\rangle$ in an essentially unique way. When $\mathbb{L}$ is a distributive lattice (which is a natural assumption in this context, see Section~\ref{sectiondistributive}) the function $\phi$ is injective and, loosely speaking, we can view $FBL\langle \mathbb{L} \rangle$ as a Banach lattice which contains a subset lattice-isomorphic to $\mathbb{L}$ in a way that its elements work as free generators modulo the lattice relations on $\mathbb{L}$.

One of the main results in \cite{ART18} is an explicit description of $FBL[E]$ as a space of functions. The main result of this paper is also a description of $FBL\langle\mathbb{L}\rangle$ similar to that $FBL[E]$. In order to state this, define $$\mathbb{L}^{\ast} = \set {x^{\ast}: \mathbb{L} \longrightarrow [-1,1] : x^{\ast} \text{ is a lattice-homomorphism}}.$$ For every $x \in \mathbb{L}$ consider the evaluation map $\dot{\delta}_x : \mathbb{L}^{\ast} \longrightarrow \mathbb{R}$ given by $\dot{\delta}_x(x^{\ast}) = x^{\ast}(x)$. And for $f \in \mathbb{R}^{\mathbb{L}^{\ast}}$, define $$ \norm{f}_\ast = \sup \set{\sum_{i = 1}^n \abs{ f(x_{i}^{\ast})} : n \in \mathbb{N}, \text{ } x_1^{\ast}, \ldots, x_n^{\ast} \in \mathbb{L}^{\ast}, \text{ }\sup_{x \in \mathbb{L}} \sum_{i=1}^n \abs{x_i^{\ast}(x)} \leq 1 }.$$

\begin{thm}\label{main}
	Consider $F$ to be the Banach lattice generated by $\set{\dot{\delta}_x : x \in \mathbb{L}}$ inside the Banach lattice of all functions $f \in \mathbb{R}^{\mathbb{L}^{\ast}}$ with $\|f\|_\ast<\infty$, endowed with the norm $\|\cdot\|_\ast$ and the pointwise operations.
	Then $F$, together with the assignment $\phi(x)=\dot{\delta}_x$ is the free Banach lattice generated by $\mathbb{L}$.
\end{thm}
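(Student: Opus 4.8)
The plan is to verify directly that $(F,\phi)$ satisfies the universal property of Definition \ref{FBLGBL}. First, the routine parts. Since the lattice operations of the ambient function space are pointwise and each $x^\ast\in\mathbb L^\ast$ is a lattice homomorphism, $\dot\delta_{x\wedge y}(x^\ast)=x^\ast(x\wedge y)=x^\ast(x)\wedge x^\ast(y)=(\dot\delta_x\wedge\dot\delta_y)(x^\ast)$, and similarly for $\vee$; hence $\phi$ is a lattice homomorphism. Taking a single point $x^\ast$ in the defining supremum of $\|\cdot\|_\ast$ shows that the constraint $\sum_i|x_i^\ast(x)|\le 1$ forces $|x_i^\ast(x)|\le 1$, so $\|\dot\delta_x\|_\ast\le 1$ and $\phi$ is bounded. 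Uniqueness of $\hat T$ is automatic: $F$ is by definition the closed sublattice generated by $\{\dot\delta_x\}$, and a Banach lattice homomorphism is continuous and preserves $+,\vee,\wedge$, so it is determined by its values $\hat T(\dot\delta_x)=T(x)$ on the generators.

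Next, I would reduce existence to a single estimate. Let $FVL(\mathbb L)$ be the free vector lattice over the set $\mathbb L$, and let $\pi:FVL(\mathbb L)\to F$ and $\tau:FVL(\mathbb L)\to X$ be the lattice homomorphisms sending the generator associated with $x$ to $\dot\delta_x$ and to $T(x)$ respectively; $\pi$ has dense range. A homomorphism $\hat T$ with $\hat T\circ\pi=\tau$ then exists, is bounded by $\|T\|$, and extends to $F$, precisely when
\begin{equation}\label{mainineq}
\|\tau(u)\|_X\le \|T\|\,\|\pi(u)\|_\ast\qquad(u\in FVL(\mathbb L)).
\end{equation}
Writing $u$ through representatives $x_1,\dots,x_n\in\mathbb L$, so that $z:=\tau(u)=p(T(x_1),\dots,T(x_n))$ and $g:=\pi(u)$ satisfies $g(x^\ast)=p(x^\ast(x_1),\dots,x^\ast(x_n))$ for a positively homogeneous lattice expression $p$, the estimate \eqref{mainineq} simultaneously gives well-definedness of $\hat T$ on the range of $\pi$ (the case $\|\pi(u)\|_\ast=0$) and the bound $\|\hat T\|\le\|T\|$. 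The reverse inequality $\|\hat T\|\ge\|T\|$ follows from $\|T(x)\|_X=\|\hat T(\dot\delta_x)\|_X\le\|\hat T\|\,\|\dot\delta_x\|_\ast\le\|\hat T\|$, and commutativity $\hat T\circ\phi=T$ is immediate.

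So everything rests on \eqref{mainineq}, equivalently on producing, for each $\eps>0$, finitely many $x_1^\ast,\dots,x_m^\ast\in\mathbb L^\ast$ with $\sup_{x\in\mathbb L}\sum_i|x_i^\ast(x)|\le 1$ and $\sum_i|g(x_i^\ast)|\ge\|z\|_X/\|T\|-\eps$. My strategy is the usual one: pick $\phi\in X^\ast_+$ with $\|\phi\|\le 1$ and $\phi(|z|)=\|z\|_X$; pass to the principal ideal $X_e$ generated by $e=\sum_j|T(x_j)|$, which by Kakutani's theorem is lattice isometric to some $C(K)$ with $e\mapsto\mathbf 1$; represent $\phi|_{X_e}$ by a positive measure $\mu$ on $K$, so that $\|z\|_X=\int_K|\hat z|\,d\mu$ where $\hat z$ is the image of $z$; and finally discretize this integral into a finite sum $\sum_i\lambda_i|\omega_{t_i}(z)|$, where the $\omega_{t_i}$ are the point evaluations at $t_i\in K$, each a lattice homomorphism on $X_e$ with $\omega_{t_i}(z)=p(\omega_{t_i}(T(x_1)),\dots,\omega_{t_i}(T(x_n)))$.

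The main obstacle is the final conversion into genuine elements of $\mathbb L^\ast$, and this is exactly where the lattice case departs from the Banach space case of \cite{ART18}. There one would set $x_i^\ast=\psi_i\circ T$ for suitable functionals $\psi_i\in X^\ast$ obtained by disjointifying $\phi$; the required normalization comes for free because pairwise disjoint positive functionals with $\|\sum_i\psi_i\|\le 1$ define an $\ell^1$-contraction on $X$, and Hahn--Banach places no obstruction on the $\psi_i$. Here, however, each $x_i^\ast$ must itself be a \emph{lattice homomorphism} $\mathbb L\to[-1,1]$, so an arbitrary positive functional $\psi_i$ is not admissible: the natural candidate $\omega_{t_i}\circ T$ is a lattice homomorphism only on the sublattice generated by $x_1,\dots,x_n$ (where it already takes values in $[-1,1]$), and one must \emph{extend it to a lattice homomorphism on all of $\mathbb L$} while keeping the joint normalization $\sup_{x\in\mathbb L}\sum_i|x_i^\ast(x)|\le 1$. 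Producing such extensions, that is, enough disjointly normalized elements of $\mathbb L^\ast$ to recover $\|z\|_X$, is the genuine content of the theorem and the delicate point that requires a separate lemma; I expect to isolate and prove it first, after which \eqref{mainineq} and hence the theorem follow by assembling the pieces above.
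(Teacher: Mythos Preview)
Your outline is viable and correctly isolates the crux, but the paper follows a genuinely different route. Rather than verifying the universal property of $F$ directly via a Kakutani representation of the target $X$ and a discretization of an integral, the paper first realizes $FBL\langle\mathbb L\rangle$ as the quotient $FBL(\mathbb L)/\mathcal I$ (with $\mathcal I$ the closed ideal generated by the lattice relations) and then proves that restriction to $\mathbb L^\ast$ gives an isometry $FBL(\mathbb L)/\mathcal I\to F$. The hard inequality $\|f\|_{\mathcal I}\le\|f|_{\mathbb L^\ast}\|_\ast$ is established first for \emph{finite} $\mathbb L$, exploiting the de~Pagter--Wickstead identification of $FBL(n)$ with the positively homogeneous continuous functions on $[-1,1]^n$ and a Urysohn-type approximation of $f$ by functions supported near $S=\mathbb L^\ast\cap\partial[-1,1]^n$; no representation theorem for $X$ is invoked at all. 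The infinite case is then reduced to the finite one via precisely the extension lemma you anticipate (Lemma~\ref{torpedo}): for finite $\mathbb F_0\subset\mathbb L$ there is a finite sublattice $\mathbb F_1\supset\mathbb F_0$ such that any lattice homomorphism on $\mathbb F_1$ extends to one on $\mathbb L$ with the same restriction to $\mathbb F_0$, and the joint $\ell^1$-normalization survives by taking the target lattice $\mathbb M$ to be the image inside $\{\xi\in[-1,1]^m:\sum_i|\xi_i|\le 1\}$. Your Kakutani-plus-discretization plan would also close once this lemma is in hand, but two adjustments are needed: $X_e$ is only lattice \emph{isomorphic} (not isometric, under the inherited $X$-norm) to $C(K)$, so the normalization must be tracked through $\phi(|T(x)|)\le\|T(x)\|_X$ rather than through $C(K)$-norms; and you must first pass from $\{x_1,\dots,x_n\}$ to the larger $\mathbb F_1$ (taking $e=\sum_{x\in\mathbb F_1}|T(x)|$) so that $\omega_{t}\circ T$ is defined on all of $\mathbb F_1$ before the lemma applies. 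The paper's route buys independence from representation theory and keeps everything inside the explicit function model of $FBL$; yours stays closer in spirit to the Banach-space proof of \cite{ART18}, which the paper explicitly notes requires ``completely different techniques''---the new ingredient in both approaches being exactly the extension lemma you identified.
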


In spite of the similarity to the Banach space case from \cite{ART18}, our proof requires completely different techniques. Section~\ref{descriptionsection} is entirely devoted to this. In Section \ref{linearsection} we focus on the case when $\mathbb{L}$ is linearly ordered. Our main result in that section is that, for $\mathbb{L}$ linearly ordered, $FBL\langle\mathbb{L}\rangle$ has the countable chain condition (ccc) if and only if $\mathbb{L}$ is order-isomorphic to a subset of the real line. Remember that a Banach lattice has the countable chain condition if in every uncountable family of positive elements there are two whose infimum is not zero. This in contrast with the recent result that $FBL[E]$ has the ccc for every Banach space $E$ \cite{APRA}. In Section~\ref{sectionsumming} we check, also when $\mathbb{L}$ is linearly ordered, that the elements of $\mathbb{L}$ inside $FBL\langle\mathbb{L}\rangle$ behave like the summing basis of $c_0$ from a linear point of view.

\section{The Banach lattice $FBL\langle\mathbb{L}\rangle$ as a quotient of a space of functions}\label{existencesection}

Throughout this section $\mathbb{L}$ is a fixed lattice. Let us start by checking that Definition~\ref{FBLGBL} provides a uniquely determined object. If $\phi:\mathbb{L}\To F$ and $\phi':\mathbb{L}\To F'$ satisfy this definition, then we can get a Banach lattice homomorphism $\hat{\phi}':F\To F'$ with $\phi' = \hat{\phi}' \circ \phi$. Reversing the roles, we also get  $\hat{\phi}:F'\To F$ with $\phi = \hat{\phi} \circ \phi'$. The function $\hat{\phi}\circ\hat{\phi}'$ and the identity function $id_F$ on $F$ both satisfy Definition \ref{FBLGBL} as $\hat{T}$  when $T=\phi$. So
$\hat{\phi}\circ\hat{\phi}' = id_F$. Similarly, reversing roles, $\hat{\phi}'\circ\hat{\phi} = id_{F'}$. Thus, we obtained inverse lattice homomorphism of norm 1 between $F$ and $F'$ that commute with $\phi$ and $\phi'$.

Now, we are going to construct a Banach lattice $F$ that satisfies Definition~\ref{FBLGBL}. We will show later that the Banach lattice described in Theorem~\ref{main} also satisfies Definition~\ref{FBLGBL}. We take as a starting point that, when we view $\mathbb{L}$ as a set with no extra structure, we have the free Banach lattice $FBL(\mathbb{L})$, together with $u:\mathbb{L}\To FBL(\mathbb{L})$, constructed by de Pagter and Wickstead, whose universal property was described in the introduction. Take $\mathcal{I}$ the closed ideal of $FBL(\mathbb{L})$ generated by  
$$\set{u(x) \vee u(y) - u(x \vee y),\ \ u(x) \wedge u(y) - u(x \wedge y)\ : \ x,y \in \mathbb{L}}.$$ 
We take $F = FBL(\mathbb{L})/\mathcal{I}$, and $\phi: \mathbb{L} \longrightarrow FBL(\mathbb{L})/\mathcal{I}$ given by $\phi(x) = u(x) + \mathcal{I}$. The very definition of $\mathcal{I}$ provides that $\phi$ is a lattice homomorphism. Now, let $X$ be a Banach lattice and $T: \mathbb{L} \longrightarrow X$ a bounded lattice homomorphism. We know that $FBL(\mathbb{L})$ satisfies the universal property of free Banach lattices. Therefore, there exists a Banach lattice homomorphism $\hat{T}^1 : FBL(\mathbb{L}) \longrightarrow X$ such that $\hat{T}^1 \circ u = T$ and $\|\hat{T}^1\| = \norm{T}$. The fact that $T$ was a lattice homomorphism implies that $\hat{T}^1$ vanishes on $\mathcal{I}$. Thus, we can have a Banach lattice homomorphism $\hat{T}: FBL(\mathbb{L})/\mathcal{I} \longrightarrow X$ given by $\hat{T}(f + \mathcal{I}) = \hat{T}^1(f)$. It is clear that $\hat{T}\circ \phi = T$. Let us see that $\norm{T} = \| \hat{T} \|$. We only need to check that $\norm{T} \geq \| \hat{T} \|$. Let $f + \mathcal{I} \in FBL(\mathbb{L})/\mathcal{I}$ with $\norm{f}_{\mathcal{I}} < 1$. We have that $$\norm{f}_\mathcal{I} = \inf \set{\norm{f+g} : g \in \mathcal{I}},$$ and, therefore, there exists $g \in \mathcal{I}$ such that $\norm{f+g} < 1$. Thus, $\| \hat{T}(f + \mathcal{I}) \| = \| \hat{T}^1(f+g) \| \leq \norm{T}$. Only the uniqueness of the extension $T$ remains to be checked. But this follows from the uniqueness of the extension to $FBL(\mathbb{L})$, because if $\hat{T}\circ \phi = T$, then $\hat{T}\circ \pi \circ u = T$, where $\pi:FBL(\mathbb{L})\To FBL(\mathbb{L})/\mathcal{I}$ is the quotient map.

We have proven that $F=FBL(\mathbb{L})/\mathcal{I}$ together with $\phi$ above, satisfy Definition~\ref{FBLGBL}. To make this representation more concrete, let us recall  the description of the free Banach lattice $FBL(A)$ generated by a set $A$, as given in \cite[Corollary 2.9]{ART18}. For every $x\in A$, consider the evaluation map $\delta_x : [-1,1]^A \longrightarrow [-1,1]$, and for every $f:[-1,1]^A\To \mathbb{R}$, define
$$ \norm{f} = \sup \set{\sum_{i = 1}^n \abs{ f(x_{i}^{\ast})} : n \in \mathbb{N}, \text{ } x_1^{\ast}, \ldots, x_n^{\ast} \in [-1,1]^A, \text{ }\sup_{x \in A} \sum_{i=1}^n \abs{x_i^{\ast}(x)} \leq 1 }.$$
It is easy to check that the set $H$ of all functions $f$ with $\|f\|<\infty$ is a Banach lattice, when endowed with this norm and with the pointwise operations. The free Banach lattice $FBL(A)$ can be taken to be the Banach lattice generated by the functions $\delta_x$ inside $H$. The function $u$ would be $u(x) = \delta_x$.

\section{Distributivity}\label{sectiondistributive}

A lattice $\mathbb{L}$ is said to be distributive if the two operations $\wedge$ and $\vee$ distribute each other. That is, $a\wedge (b\vee c) = (a\wedge b) \vee (a\wedge c)$ and $a\vee (b\wedge c) = (a\vee b)\wedge (a\vee c)$ for all $a,b,c\in\mathbb{L}$. For a lattice $\mathbb{L}$, let $\widetilde{\mathbb{L}} = \phi(\mathbb{L})$ be the image of $\mathbb{L}$ inside $FBL\langle \mathbb{L}\rangle$. The following proposition collects some well known facts and observations:

\begin{prop}
For a lattice $\mathbb{L}$ the following are equivalent:
\begin{enumerate}
\item $\mathbb{L}$ is distributive,
\item $\mathbb{L}$ is lattice-isomorphic to a subset of a Boolean algebra,
\item $\mathbb{L}$ is lattice-isomorphic to a bounded subset of a Banach lattice,
\item The canonical map $\phi:\mathbb{L}\To FBL\langle\mathbb{L}\rangle$ is injective.
\end{enumerate}
\end{prop}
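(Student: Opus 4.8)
The plan is to prove the chain of implications $(1)\Rightarrow(2)\Rightarrow(3)\Rightarrow(4)\Rightarrow(1)$, which seems the most economical cycle. The implication $(1)\Rightarrow(2)$ is a classical result in lattice theory, namely a form of Birkhoff's representation (or the Stone-type representation for distributive lattices): every distributive lattice embeds into a Boolean algebra, indeed into the Boolean algebra of subsets of its set of prime filters, via the map sending $a$ to the set of prime filters containing $a$. I would simply invoke this and give a one-line reminder rather than reproving it. For $(2)\Rightarrow(3)$, given a lattice-embedding of $\mathbb{L}$ into a Boolean algebra $\mathcal{B}$, I would realize $\mathcal{B}$ concretely as a sublattice of the Banach lattice $C(K)$ (or $\ell_\infty(K)$) for $K$ its Stone space, identifying each element of $\mathcal{B}$ with the $\{0,1\}$-valued indicator of a clopen set; the join and meet in $\mathcal{B}$ become the pointwise $\vee$ and $\wedge$ of these indicators, so the composite is a lattice-isomorphism onto a bounded (norm $\le 1$) subset of a Banach lattice.

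The implication $(3)\Rightarrow(4)$ is where the universal property does the work. Suppose $j:\mathbb{L}\To Y$ is a lattice-isomorphism onto a bounded subset of a Banach lattice $Y$; in particular $j$ is a bounded, \emph{injective} lattice homomorphism. By the universal property (Definition~\ref{FBLGBL}) there is a Banach lattice homomorphism $\hat{j}:FBL\langle\mathbb{L}\rangle\To Y$ with $\hat{j}\circ\phi = j$. If $\phi(x)=\phi(y)$ then $j(x)=\hat{j}(\phi(x))=\hat{j}(\phi(y))=j(y)$, and injectivity of $j$ forces $x=y$. Hence $\phi$ is injective. This step is essentially a diagram chase and should be short.

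The remaining implication $(4)\Rightarrow(1)$ is the one I expect to carry the real content, so it is the main obstacle. The cleanest route is the contrapositive: if $\mathbb{L}$ is \emph{not} distributive, I want to show $\phi$ is not injective. The key observation is that $\phi(\mathbb{L})=\widetilde{\mathbb{L}}$ lives inside the Banach lattice $FBL\langle\mathbb{L}\rangle$, and $\phi$ is by construction a lattice homomorphism; but the lattice operations inside any Banach lattice (vector lattice) are automatically distributive, since every Banach lattice is a distributive lattice. Therefore $\widetilde{\mathbb{L}}=\phi(\mathbb{L})$, being a sublattice of a distributive lattice, is itself distributive. If $\phi$ were injective it would be a lattice-isomorphism of $\mathbb{L}$ onto $\widetilde{\mathbb{L}}$, and distributivity would transfer back to $\mathbb{L}$, contradicting non-distributivity. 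Thus $\phi$ cannot be injective, proving $(4)\Rightarrow(1)$ by contraposition. The only point requiring care is the assertion that vector lattices are distributive; I would either cite the standard fact or recall the identity $a\wedge(b\vee c)=(a\wedge b)\vee(a\wedge c)$ valid in any Riesz space, which follows from the lattice identities available there. I expect the whole proposition to be short once these standard ingredients are marshaled, with the genuine mathematical weight resting on the classical representation theorem cited in $(1)\Rightarrow(2)$ and on the distributivity of vector lattices used in $(4)\Rightarrow(1)$.
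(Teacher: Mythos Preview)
Your proposal is correct and essentially matches the paper's proof: the paper cites the equivalence of (1), (2), (3) as well known (with the same classical ingredients you invoke), observes that $(4)\Rightarrow(3)$ is obvious since $\phi$ itself then embeds $\mathbb{L}$ as a bounded sublattice of $FBL\langle\mathbb{L}\rangle$, and proves $(3)\Rightarrow(4)$ by exactly your diagram chase with $\hat{T}\circ\phi=T$. Your direct argument for $(4)\Rightarrow(1)$ via distributivity of Banach lattices is just the paper's route $(4)\Rightarrow(3)\Rightarrow(1)$ compressed into one step, so the difference is purely organizational.
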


\begin{proof}
The equivalence of (1), (2) and (3) is well known, see \cite[Theorem II.19]{Gratzer} for $1\Rightarrow 2$, \cite[Theorem 1.b.3]{LinTza} for $2\Rightarrow 3$ and \cite[Proposition II.1.5]{Schaefer} for $3\Rightarrow 1$. It is obvious that (4) implies (3). If (3) holds, then we have a bounded injective lattice homomorphism $T:\mathbb{L}\To X$ for some Banach lattice $X$. Using Definition~\ref{FBLGBL}, there is $\hat{T}$ such that $\hat{T}\circ \phi = T$. Since $T$ is injective, $\phi$ is injective and therefore (4) holds.
\end{proof}

\begin{prop}\label{distrifree}
$FBL\langle\mathbb{L}\rangle = FBL\langle\widetilde{\mathbb{L}}\rangle$. More precisely, if $F$ with $\phi$ is the free Banach lattice over the lattice $\mathbb{L}$, then $F$ with the inclusion map is the free Banach lattice over the lattice $\widetilde{\mathbb{L}}$.
\end{prop}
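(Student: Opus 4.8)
The plan is to verify directly that the pair $(F,\iota)$, where $\iota:\widetilde{\mathbb{L}}\hookrightarrow F$ denotes the inclusion, satisfies the universal property of Definition~\ref{FBLGBL} with respect to the lattice $\widetilde{\mathbb{L}}$. The whole argument is a universal-property chase whose engine is the observation that $\phi$ factors as $\phi=\iota\circ\phi_0$, where $\phi_0:\mathbb{L}\To\widetilde{\mathbb{L}}$ is the corestriction of $\phi$ to its image. Since $\widetilde{\mathbb{L}}=\phi(\mathbb{L})$ is the image of the lattice homomorphism $\phi$, it is a sublattice of $F$ and $\phi_0$ is a \emph{surjective} lattice homomorphism, while $\iota$ is a lattice homomorphism as well. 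Surjectivity of $\phi_0$ is the fact I will lean on repeatedly.

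First I would set up the existence part. Given any Banach lattice $X$ and any bounded lattice homomorphism $T:\widetilde{\mathbb{L}}\To X$, form $T\circ\phi_0:\mathbb{L}\To X$. This is a bounded lattice homomorphism, and because $\phi_0$ maps $\mathbb{L}$ onto $\widetilde{\mathbb{L}}$ the two suprema defining the norms coincide, so $\norm{T\circ\phi_0}=\norm{T}$. Applying the universal property of $FBL\langle\mathbb{L}\rangle=(F,\phi)$ to $T\circ\phi_0$ yields a Banach lattice homomorphism $\hat{T}:F\To X$ with $\hat{T}\circ\phi=T\circ\phi_0$ and $\norm{\hat{T}}=\norm{T\circ\phi_0}=\norm{T}$. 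To see that this $\hat{T}$ is the map we want, I would rewrite $\hat{T}\circ\phi=T\circ\phi_0$ as $(\hat{T}\circ\iota)\circ\phi_0=T\circ\phi_0$ using $\phi=\iota\circ\phi_0$; cancelling the surjection $\phi_0$ on the right gives $\hat{T}\circ\iota=T$. Thus $\hat{T}$ extends $T$ through $\iota$ and has the correct norm.

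For uniqueness, suppose $\hat{T}'$ is any Banach lattice homomorphism with $\hat{T}'\circ\iota=T$ and $\norm{\hat{T}'}=\norm{T}$. Composing with $\phi_0$ gives $\hat{T}'\circ\phi=\hat{T}'\circ\iota\circ\phi_0=T\circ\phi_0$, while $\norm{\hat{T}'}=\norm{T}=\norm{T\circ\phi_0}$, so $\hat{T}'$ satisfies exactly the conditions characterizing the extension of $T\circ\phi_0$ in the universal property of $(F,\phi)$. By the uniqueness clause there, $\hat{T}'=\hat{T}$, which completes the verification.

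I do not expect a genuine obstacle here: the statement is essentially formal, an instance of the principle that passing from a lattice to the image of its canonical map does not change the generated free Banach lattice. The only point requiring care is the norm bookkeeping — at each step one must confirm that $\norm{T\circ\phi_0}=\norm{T}$ and $\norm{\hat{T}}=\norm{T}$, and this is precisely where the surjectivity of $\phi_0$ onto $\widetilde{\mathbb{L}}$ enters, forcing the suprema taken over $\mathbb{L}$ and over $\widetilde{\mathbb{L}}$ to agree.
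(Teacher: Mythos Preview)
Your proof is correct and is precisely the detailed verification that the paper omits: the paper's own proof of this proposition is simply the one-line remark that it ``is immediate from Definition~\ref{FBLGBL}.'' You have written out that immediate argument in full, using the factorization $\phi=\iota\circ\phi_0$ with $\phi_0$ surjective to transfer the universal property from $(F,\phi)$ to $(F,\iota)$, so the approaches coincide.
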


The proof is immediate from Definition~\ref{FBLGBL}. The conclusion of these observations is that the most natural case in which to consider $FBL\langle\mathbb{L}\rangle$ is when $\mathbb{L}$ is distributive, and that the case of general $\mathbb{L}$ reduces to the distributive case in a natural easy way. Still, we find that it may be useful to state the results for any lattice $\mathbb{L}$. Two more facts: 

\begin{prop}\label{distrifactor}
Every lattice-homomorphism $x^\ast:\mathbb{L}\To [-1,1]$ factors through $\widetilde{\mathbb{L}}$. That is, there exists $y^\ast:\widetilde{\mathbb{L}}\To [-1,1]$ such that $x^\ast = y^\ast \circ \phi$.
\end{prop}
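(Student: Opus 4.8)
The plan is to avoid defining $y^\ast$ directly through the formula $y^\ast(\phi(x))=x^\ast(x)$. For a non-distributive $\mathbb{L}$ the map $\phi$ need not be injective, so well-definedness of such a formula would require exactly the kind of argument we are trying to avoid having to make by hand. Instead, I would first extend $x^\ast$ to all of $FBL\langle\mathbb{L}\rangle$ by means of the universal property, and only afterwards restrict the extension back to $\widetilde{\mathbb{L}}$.

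First I would observe that $[-1,1]$ sits inside the Banach lattice $\mathbb{R}$ (with its usual order, absolute value as norm, and $\wedge,\vee$ given by $\min,\max$). Hence $x^\ast:\mathbb{L}\To\mathbb{R}$ is a bounded lattice homomorphism with $\norm{x^\ast}=\sup_{x\in\mathbb{L}}\abs{x^\ast(x)}\leq 1$. Applying Definition~\ref{FBLGBL} with $X=\mathbb{R}$ and $T=x^\ast$ then yields a Banach lattice homomorphism $\widehat{x^\ast}:FBL\langle\mathbb{L}\rangle\To\mathbb{R}$ satisfying $x^\ast=\widehat{x^\ast}\circ\phi$.

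Next I would set $y^\ast$ to be the restriction of $\widehat{x^\ast}$ to $\widetilde{\mathbb{L}}=\phi(\mathbb{L})$. Since $\phi$ is a lattice homomorphism, $\widetilde{\mathbb{L}}$ is a sublattice of $FBL\langle\mathbb{L}\rangle$, and so $y^\ast$ is again a lattice homomorphism. For any $w\in\widetilde{\mathbb{L}}$, writing $w=\phi(x)$ gives $y^\ast(w)=\widehat{x^\ast}(\phi(x))=x^\ast(x)\in[-1,1]$; thus $y^\ast$ takes values in $[-1,1]$, and the identity $x^\ast=y^\ast\circ\phi$ holds by construction.

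I do not expect any genuinely hard step here: the single point requiring care is the well-definedness issue stemming from the possible non-injectivity of $\phi$, and this is resolved automatically, because $\widehat{x^\ast}$ is produced as a bona fide function on the whole space \emph{before} restriction. Its value on $w=\phi(x)$ therefore cannot depend on the chosen representative $x$, which is precisely what a naive direct definition would have had to verify separately.
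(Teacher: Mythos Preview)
Your proof is correct and follows exactly the same approach as the paper: extend $x^\ast$ to $FBL\langle\mathbb{L}\rangle$ via the universal property of Definition~\ref{FBLGBL} (with $X=\mathbb{R}$), then restrict to $\widetilde{\mathbb{L}}$. The paper's proof is a terse two-line version of what you wrote; your additional remarks on well-definedness and on the range lying in $[-1,1]$ are correct elaborations but not a different method.
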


\begin{proof}
Find a Banach lattice homomorphism of norm at most one $\hat{x}^\ast:FBL\langle\mathbb{L}\rangle\To \mathbb{R}$ with $x^\ast = \hat{x}^\ast\circ\phi$, as in Definition~\ref{FBLGBL}. Take $y^\ast = \hat{x}^\ast|_{\widetilde{\mathbb{L}}}$.
\end{proof}

\begin{prop}
Every finitely generated sublattice of a distributive lattice is finite.
\end{prop}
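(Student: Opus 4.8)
The plan is to show that, thanks to distributivity, every element of the sublattice generated by a finite set of generators can be written in a \emph{disjunctive normal form}, and then simply to count how many such forms there can be. Concretely, let $\mathbb{L}$ be distributive and let $a_1,\ldots,a_n\in\mathbb{L}$ generate the sublattice $\mathbb{M}$. For a nonempty subset $S\subseteq\set{1,\ldots,n}$ write $m_S=\bigwedge_{i\in S}a_i$; I will call these finitely many elements \emph{meet-terms}. The claim to establish is that every element of $\mathbb{M}$ can be expressed as a finite join $\bigvee_{S\in\mathcal{S}}m_S$ for some family $\mathcal{S}$ of nonempty subsets of $\set{1,\ldots,n}$.

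First I would prove this claim by induction on the complexity of a lattice term in the generators. The generators $a_i=m_{\set{i}}$ are themselves in this form, so the base case is immediate. For the inductive step, suppose $x=\bigvee_{S\in\mathcal{S}}m_S$ and $y=\bigvee_{T\in\mathcal{T}}m_T$ are two elements already written in normal form. Then $x\vee y=\bigvee_{R\in\mathcal{S}\cup\mathcal{T}}m_R$ is again a join of meet-terms, so joins cause no difficulty. The substantive step is the meet: applying the distributive law (extended from the binary case to finite joins by a routine induction) gives $x\wedge y=\bigvee_{S\in\mathcal{S}}\bigvee_{T\in\mathcal{T}}(m_S\wedge m_T)$, and since $m_S\wedge m_T=\bigwedge_{i\in S\cup T}a_i=m_{S\cup T}$ is itself a meet-term, the right-hand side is once more a join of meet-terms. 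Hence the class of elements admitting a disjunctive normal form is closed under both $\wedge$ and $\vee$ and contains the generators, so it is all of $\mathbb{M}$.

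Once the normal form is available, finiteness is a counting argument and requires no further structure. There are at most $2^n-1$ distinct meet-terms $m_S$, one for each nonempty $S\subseteq\set{1,\ldots,n}$, and every element of $\mathbb{M}$ is the join of the meet-terms indexed by some family $\mathcal{S}$. Therefore $\mathbb{M}$ has at most $2^{2^n-1}$ elements, and in particular it is finite.

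I expect the only real content to lie in the disjunctive normal form reduction, and within that specifically in the passage from the binary distributive law stated in the definition to the finite distributive law $\left(\bigvee_{S\in\mathcal{S}}m_S\right)\wedge\left(\bigvee_{T\in\mathcal{T}}m_T\right)=\bigvee_{S,T}(m_S\wedge m_T)$ used above; this is a straightforward induction on $\abs{\mathcal{S}}$ and $\abs{\mathcal{T}}$ but is the place where distributivity is genuinely used, and everything else is formal.
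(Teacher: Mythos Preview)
Your proof is correct. The paper itself does not give an argument here; it simply records the statement as a well-known fact and cites Birkhoff, \emph{Lattice theory}, Lemma~III.3. What you have written is precisely the classical disjunctive-normal-form proof one finds in such references: distributivity lets every lattice polynomial in $a_1,\ldots,a_n$ be rewritten as a join of meets of generators, and a crude count then bounds the sublattice by $2^{2^n-1}$ elements. So your approach is not different from the paper's in any substantive sense---you are just supplying the standard proof that the paper elected to outsource.
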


\begin{proof}
This is a well known fact, see \cite[Lemma III.3]{Birkhoff}.
\end{proof}

\section{The Banach lattice $FBL\langle\mathbb{L}\rangle$ as a space of functions}\label{descriptionsection}

This section is devoted to the proof of Theorem~\ref{main}. Let $FBL_\ast\langle \mathbb{L} \rangle$ be the Banach lattice described in that theorem. By Propositions~\ref{distrifree} and~\ref{distrifactor}, both $FBL\langle\mathbb{L}\rangle$ and $FBL_\ast\langle\mathbb{L}\rangle$ remain unchanged if we change $\mathbb{L}$ by $\widetilde{\mathbb{L}}$. So we can assume along this section that $\mathbb{L}$ is distributive. Since we already know that $FBL(\mathbb{L})/\mathcal{I}$ is the free Banach lattice over the lattice $\mathbb{L}$, what we have to do is to find a Banach lattice isometry  $S:FBL(\mathbb{L})/\mathcal{I}  \To FBL_\ast\langle\mathbb{L}\rangle$ such that $S(\delta_x + \mathcal{I}) = \dot{\delta}_x$.

We know that $FBL(\mathbb{L}) = \overline{lat}^{\norm{\cdot}}\set{\delta_x : x \in \mathbb{L}} \subset \mathbb{R}^{[-1,1]^\mathbb{L}}$, where $$ \norm{f} = \sup \set{\sum_{i = 1}^n \abs{f(x_{i}^{\ast})} : n \in \mathbb{N}, \text{ } x_1^{\ast}, \ldots, x_n^{\ast} \in [-1,1]^\mathbb{L}, \text{ }\sup_{x \in \mathbb{L}} \sum_{i=1}^n \abs{x_i^{\ast}(x)} \leq 1 },$$ and recall that $FBL_\ast\langle \mathbb{L} \rangle = \overline{lat}^{\norm{\cdot}_\ast}\set{\dot{\delta}_x : x \in \mathbb{L}} \subset \mathbb{R}^{\mathbb{L}^\ast}$, where $$ \norm{f}_\ast = \sup \set{\sum_{i = 1}^n \abs{f(x_{i}^{\ast})} : n \in \mathbb{N}, \text{ } x_1^{\ast}, \ldots, x_n^{\ast} \in \mathbb{L}^{\ast}, \text{ }\sup_{x \in \mathbb{L}} \sum_{i=1}^n \abs{x_i^{\ast}(x)} \leq 1 }.$$ 

 For every function $f:[-1,1]^{\mathbb{L}}\To \mathbb{R}$, consider its restriction $R(f) = f \vert_{\mathbb{L}^\ast}$. It is clear that the function $R$ commutes with linear combination and the lattice operations and that $\norm{R(f)}_\ast \leq \norm{f}$. Moreover, $R(\delta_x) = \dot{\delta}_x$ for every $x\in\mathbb{L}$. From this, we conclude that if $f \in FBL(\mathbb{L})$, then $R(f) \in FBL_\ast\langle \mathbb{L} \rangle$, and we can view $R: FBL(\mathbb{L}) \longrightarrow FBL_\ast \langle \mathbb{L} \rangle$ as a Banach lattice homomorphism of norm 1. Moreover, since $\mathbb{L}^\ast$ consists of lattice homomorphisms, $R$ vanishes on the ideal $\mathcal{I}$ that we defined at the beginning of this section. Thus, we have a Banach lattice homomorphism of norm at most one $$R_{\mathcal{I}}: FBL(\mathbb{L})/\mathcal{I} \longrightarrow FBL_\ast\langle \mathbb{L} \rangle$$ given by $R_\mathcal{I}(f + \mathcal{I}) = R(f)$ for every $f + \mathcal{I} \in FBL(\mathbb{L})/\mathcal{I}$. What we want to prove is that $R_{\mathcal{I}}$ is an isometry. That is, we have to show that $$\norm{f}_{\mathcal{I}} \leq \norm{f\vert_{\mathbb{L}^\ast}}_\ast$$ for every $f \in FBL(\mathbb{L})$, where
$\|f\|_\mathcal{I} = \inf\{\|f+g\| : g\in \mathcal{I}\}$.

First, suppose that $\mathbb{L} = \set{0, \ldots, n-1} = n$ is finite. De Pagter and Wickstead showed that in this case, $FBL(\mathbb{L})$ consists exactly of all the positively homogeneous continuous functions on $[-1,1]^\mathbb{L} = [-1,1]^n$. Here, positively homogeneous means that $f(rx) = rf(x)$ whenever $r$ is a positive scalar. Moreover, if we consider the boundary $\partial [-1,1]^n$, and the Banach lattice of continuous functions $C(\partial [-1,1]^n)$, the restriction map $P:FBL(\mathbb{L})\To C(\partial [-1,1]^n)$ is a Banach lattice isomorphism (it is not however, an isometry: the norm of $FBL(\mathbb{L})$ is transferred to a lattice norm that is equivalent to the supremum norm).

A closed ideal in a lattice of continuous functions on a compact space always consists of the functions that vanish on a certain closed set. Thus, there exists a closed set $S\subset \partial [-1,1]^n$ such that
 $$\mathcal{I} = \set{f\in FBL(\mathbb{L}) : f\vert_S = 0}.$$

In fact, the points of $S$ must be those where $f$ vanish for all $f\in\mathcal{I}$, or equivalently, for all generators $f$ of $\mathcal{I}$: $$S = \set{(\xi_x)_{x \in \mathbb{L}}\in  \partial [-1,1]^n : \xi_x \vee \xi_y =\xi_{x \vee y},\, \xi_x \wedge \xi_y =\xi_{x \wedge y},\,x,y \in \mathbb{L}} = \mathbb{L}^\ast\cap \partial [-1,1]^n.$$

Now fix $f \in FBL(\mathbb{L})$, and let us prove that $\norm{f}_{\mathcal{I}} \leq \norm{f\vert_{\mathbb{L}^\ast}}_\ast$. Remember that $$ \norm{f\vert_{\mathbb{L}^\ast}}_\ast = \sup \set{\sum_{i = 1}^m \abs{f(x_{i}^{\ast})} : m \in \mathbb{N}, \text{ } x_1^{\ast}, \ldots, x_m^{\ast} \in \mathbb{L}^{\ast}, \text{ }\sup_{x \in \mathbb{L}} \sum_{i=1}^m \abs{x_i^{\ast}(x)} \leq 1 },$$ and  $$\norm{f}_{\mathcal{I}} = \inf \set{\norm{g} : g \in FBL(\mathbb{L}), f \sim_{\mathcal{I}} g}.$$ 

Given $k \in \mathbb{N}$, let $$S_k^+ = \set{x^\ast \in \partial [-1,1]^n : d(x^\ast,S)<\frac{1}{k}}$$ and $$S_k^- = \set{x^\ast \in \partial [-1,1]^n : d(x^\ast,S) \geq \frac{1}{k}}.$$

Since $S$ and $S_k^-$ are disjoint closed subsets of $\partial [-1,1]^n$, by Urysohn's lemma we can find a continuous function $\widetilde{1}_k : \partial [-1,1]^n\To [0,1]$ such that $\widetilde{1_k}(S) = 1$ and $\widetilde{1_k}(S_k^-) = 0$.

Define $f_k = P^{-1}(\widetilde{1_k}f|_S)\in FBL(\mathbb{L})$ be the positively homogeneous extension of $\widetilde{1_k}f|_S$ to the cube $[-1,1]^n$. Then $f_k \in FBL(\mathbb{L})$, and moreover,
 since $f_k|_S = f|_S$, we have that $f_k \sim_\mathcal{I} f$ for every $k$. Therefore, it is enough to prove that for a given $\varepsilon > 0$, there exists $k \in \mathbb{N}$ such that $\norm{f_k} \leq \norm{f\vert_{\mathbb{L}^\ast}}_\ast + \varepsilon$. 

We have that $$ \norm{f\vert_{\mathbb{L}^\ast}}_\ast = \sup \set{\sum_{i = 1}^m \abs{r_i f(x_{i}^{\ast})} : x_1^{\ast}, \ldots, x_m^{\ast} \in S, \text{ } r_1, \ldots, r_m \in \mathbb{R},\text{ }\sup_{x \in \mathbb{L}} \sum_{i=1}^m \abs{r_i x_i^{\ast}(x)} \leq 1 },$$  $$ \norm{f_k} = \sup \set{\sum_{i = 1}^m \abs{r_i f_k(x_{i}^{\ast})} : x_1^{\ast}, \ldots, x_m^{\ast} \in \partial [-1,1]^n, \text{ } r_1, \ldots, r_m \in \mathbb{R},\text{ }\sup_{x \in \mathbb{L}} \sum_{i=1}^m \abs{r_i x_i^{\ast}(x)} \leq 1 }.$$

Notice that the scalars $r_1, \ldots, r_m \in \mathbb{R}$ that appear in these formulas always satisfy $\sum_{i=1}^m|r_i|\leq n$. This is because for every $i$ we can find $\xi_i\in\mathbb{L}$ with $x_i^\ast (\xi)= \pm 1$, and then, $$ \sum_{i=1}^m \abs{r_i} = \sum_{\xi\in\mathbb{L}} \sum_{\xi_i = \xi} \abs{r_i x_i^\ast(\xi)} \leq \sum_{\xi\in\mathbb{L}} 1 = n.$$

The function $f$ is bounded and uniformly continuous on $[-1,1]^n$, so we can pick $k \in \mathbb{N}$ satisfying the following two conditions:
  $$ (1) \ \text{For all }x^\ast, y^\ast \in [-1,1]^n, \ \text{ if } d(x^\ast,y^\ast) \leq \frac{1}{k}, \text{ then }\abs{f(x^\ast) - f(y^\ast)} < \varepsilon/2n.$$
$$ (2) \ \frac{M n^2 }{n+k} < \frac{\varepsilon}{2}, \text{ where } M = \max\{|f(y^\ast)| : y^\ast \in [-1,1]^n\}.$$
 By the definition of $S_k^+$, given $x_i^\ast \in S_k^+$, there exists $y_i^\ast \in S$ such that $d(x_i^\ast, y_i^\ast) \leq \frac{1}{k}$. When $x_i^\ast\in S$, we can take $y_i^\ast = x_i^\ast$. In this way, we can estimate any sum in the supremum that gives $\norm{f_k}$ as follows:
\begin{eqnarray*} 
 \sum_{i=1}^m \abs{r_i f_k(x_i^\ast)} & = &  \sum_{x_i^\ast \in S_k^+} \abs{r_i f_k(x_i^\ast)} + \sum_{x_i^\ast \in S_k^-} \abs{r_i f_k(x_i^\ast)} \\ &=&   \sum_{x_i^\ast \in S_k^+} \abs{r_i f_k(x_i^\ast)} 
  \leq   \sum_{x_i^\ast \in S_k^+} \abs{r_i f(x_i^\ast)} \\
 & \leq &  \sum_{x_i^\ast \in S_k^+} \abs{r_i f(y_i^\ast)} + \sum_{x_i^\ast \in S_k^+} \abs{r_i}\abs{f(x_i^\ast) - f(y_i^\ast)} \\
 & \leq &  \sum_{x_i^\ast \in S_k^+} \abs{r_i f(y_i^\ast)} + \frac{\varepsilon}{2n}\sum_{x_i^\ast \in S_k^+} \abs{r_i}\\
 & \leq &  \sum_{x_i^\ast \in S_k^+} \abs{r_i f(y_i^\ast)} + \frac{\varepsilon}{2}. 
\end{eqnarray*}

We have estimated a sum in the supremum that gives $\norm{f_k}$ by something that looks very much like a sum in the supremum that gives 
$\norm{f\vert_{\mathbb{L}^\ast}}_\ast$. Still, in order to have a sum in that supremum we would need that $\sup_{x\in\mathbb{L}}\sum |r_iy_i^\ast(x)|\leq 1$. This is not the case, but we will get it after a small perturbation. For $x \in \mathbb{L}$,

\begin{eqnarray*}
\sum_{x_i^\ast \in S_k^+}\abs{r_i y_i^\ast(x)} & \leq & \sum_{x_i^\ast \in S_k^+} \abs{r_i x_i^\ast(x)} + \sum_{x_i^\ast \in S_k^+} \abs{r_i}\abs{y_i^\ast(x) - x_i^\ast(x)}\\
 & \leq &  \sum_{x_i^\ast \in S_k^+} \abs{r_i x_i^\ast(x)} + \frac{1}{k}\sum_{x_i^\ast \in S_k^+} \abs{r_i}\\ & \leq & 1 + \frac{n}{k}.
\end{eqnarray*}

Thus, the scalars $\widetilde{r_i} = \frac{r_i}{1 + n/k}$ and the elements $y_i^\ast$, for every $i$ with $x_i^\ast \in S_k^+$, are as required in the supremum that gives $\norm{f \vert_{\mathbb{L}^\ast}}_\ast$. Coming back to our estimate of the sum in the sup of $\norm{f_k}$:

\begin{eqnarray*} 
 \sum_{i=1}^m \abs{r_i f_k(x_i^\ast)}  & \leq &  \sum_{x_i^\ast \in S_k^+} \abs{r_i f(y_i^\ast)} + \frac{\varepsilon}{2}\\ 
  & \leq & \sum_{x_i^\ast \in S_k^+} \abs{\widetilde{r_i} f(y_i^\ast)} + \sum_{x_i^\ast \in S_k^+} \abs{(r_i - \widetilde{r_i}) f(y_i^\ast)} + \frac{\varepsilon}{2} \\
 & \leq & \norm{f \vert_{\mathbb{L}^\ast}}_\ast + \left(1 - \frac{1}{1 + n/k}\right)\sum_{x_i^\ast \in S_k^+} \abs{r_i f(y_i^\ast)} + \frac{\varepsilon}{2} \\
  & = & \norm{f \vert_{\mathbb{L}^\ast}}_\ast +  \frac{n}{n+k}\sum_{x_i^\ast \in S_k^+} \abs{r_i f(y_i^\ast)} + \frac{\varepsilon}{2} \\
    & \leq & \norm{f \vert_{\mathbb{L}^\ast}}_\ast +  \frac{Mn }{n+k}\sum_{x_i^\ast \in S_k^+} \abs{r_i} + \frac{\varepsilon}{2} \\
& \leq & \norm{f \vert_{\mathbb{L}^\ast}}_\ast +  \frac{Mn^2 }{n+k} + \frac{\varepsilon}{2} \leq \norm{f \vert_{\mathbb{L}^\ast}}_\ast +  \varepsilon, \\
\end{eqnarray*} as we needed to prove. This finishes the proof of Theorem~\ref{main} in the case when $\mathbb{L}$ is finite. Before getting to the infinite case, we state a lemma.

\begin{lem}\label{torpedo}
	Let $\mathbb{L}$ be a distributive lattice and $\mathbb{F}_0\subset \mathbb{L}$ be a finite subset. Then, there exists a finite sublattice $\mathbb{F}_1\subset\mathbb{L}$ such that for every lattice $\mathbb{M}$ and every lattice homomorphism $y^\ast:\mathbb{F}_1\To \mathbb{M}$ there exists a lattice homomorphism $z^\ast:\mathbb{L}\To \mathbb{M}$ such that $z^\ast|_{\mathbb{F}_0} = y^\ast|_{\mathbb{F}_0}$.
\end{lem}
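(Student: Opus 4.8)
The plan is to first reduce the statement to a retraction problem. I claim the conclusion is equivalent to the existence of a finite sublattice $\mathbb{F}_1$ with $\mathbb{F}_0\subseteq\mathbb{F}_1\subseteq\mathbb{L}$ together with a lattice homomorphism $r:\mathbb{L}\To\mathbb{F}_1$ satisfying $r|_{\mathbb{F}_0}=\mathrm{id}_{\mathbb{F}_0}$. Indeed, given such an $r$, for any $\mathbb{M}$ and any lattice homomorphism $y^\ast:\mathbb{F}_1\To\mathbb{M}$ the composition $z^\ast=y^\ast\circ r$ is a lattice homomorphism $\mathbb{L}\To\mathbb{M}$ with $z^\ast|_{\mathbb{F}_0}=y^\ast|_{\mathbb{F}_0}$; conversely, applying the conclusion to $\mathbb{M}=\mathbb{F}_1$ and $y^\ast=\mathrm{id}_{\mathbb{F}_1}$ produces exactly such an $r$. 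Thus the whole content of the lemma is to build this finite retract. It is worth noting already here that one cannot in general take $\mathbb{F}_1$ to be the sublattice generated by $\mathbb{F}_0$: the three-element chain sitting inside $\mathbf 2\times\mathbf 2$ admits no retraction onto it, and this is precisely why the lemma must allow $\mathbb{F}_1$ to be strictly larger than $\langle\mathbb{F}_0\rangle$.

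Next I would make $\mathbb{L}$ concrete. Since $\mathbb{L}$ is distributive, sending $x$ to the set of prime filters containing it embeds $\mathbb{L}$ as a sublattice of the power set $2^X$, where $X$ denotes the set of lattice homomorphisms $x^\ast:\mathbb{L}\To\{0,1\}$. Under this identification $\vee$ and $\wedge$ become $\cup$ and $\cap$, each $x\in\mathbb{L}$ becomes a clopen up-set $\hat x$ of the compact, totally order-disconnected space $X$, and for any finite set $T\subseteq X$ the \emph{trace} $e_T(x)=\hat x\cap T$ is a lattice homomorphism of $\mathbb{L}$ into the finite lattice of up-sets of $T$.

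The retraction itself I would write down by hand. For each $t\in T$, total order-disconnectedness provides an element $b_t\in\mathbb{L}$ whose trace is exactly the principal up-set of $t$ in $T$ and which moreover lies below the smallest element of $\langle\mathbb{F}_0\rangle$ containing $t$. Setting $\mathbb{F}_1=\langle\mathbb{F}_0\cup\set{b_t:t\in T}\rangle$, which is finite because it is a finitely generated distributive lattice, I define
$$r(x)=\bigvee\set{b_t:\ t\in\hat x\cap T}.$$
Because $\widehat{x\vee y}=\hat x\cup\hat y$, the map $r$ lands in $\mathbb{F}_1$ and preserves joins automatically; the two remaining requirements, that $r$ preserve meets and that $r|_{\mathbb{F}_0}=\mathrm{id}_{\mathbb{F}_0}$, both reduce to $e_T$ being injective on $\mathbb{F}_1$ and to $T$ being sufficiently dense below the sets $\hat a$ for $a\in\mathbb{F}_0$.

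The hard part will be the choice of $T$. One must select a \emph{single} finite set $T$ that simultaneously separates the points of $\mathbb{F}_1$ and is dense enough to force the two inequalities above, and this is delicate precisely because $\mathbb{F}_1$ itself depends on $T$ through the separators $b_t$. I expect to resolve this interdependence by a compactness argument in $X$: the clopen sets $\hat b_t$ cover each $\hat a$, so compactness yields a finite $T$ realizing the density condition, after which finitely many further points can be added to separate the resulting $\mathbb{F}_1$ and close the loop. Controlling this feedback between $T$ and $\mathbb{F}_1$, and thereby guaranteeing that $r$ is a genuine lattice homomorphism, is the crux of the argument.
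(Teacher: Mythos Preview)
Your reduction to the retraction problem is correct and is the cleanest way to see what the lemma is really asking: applying the conclusion with $\mathbb{M}=\mathbb{F}_1$ and $y^\ast=\mathrm{id}_{\mathbb{F}_1}$ recovers a homomorphism $r:\mathbb{L}\to\mathbb{F}_1$ fixing $\mathbb{F}_0$, and conversely composing with such an $r$ handles every target $\mathbb{M}$ at once. This reformulation is not made explicit in the paper and is worth keeping.

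The gap is exactly where you locate it, and the outline you give does not close it. Your map $r(x)=\bigvee_{t\in\hat x\cap T}b_t$ preserves joins for free, but meet-preservation unwinds to $b_t\wedge b_s\le \bigvee\{b_u: u\in({\uparrow}t)\cap({\uparrow}s)\}$ \emph{as elements of $\mathbb{L}$}, whereas all you control is the trace of both sides on $T$; you are implicitly assuming $e_T$ is injective on $\mathbb{F}_1$. Your proposed fix --- enlarge $T$ to separate $\mathbb{F}_1$ --- destroys the trace condition $e_T(b_t)={\uparrow}_T t$, so the $b_t$ must be rebuilt, $\mathbb{F}_1$ changes, and you are back where you started. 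Compactness of $X$ does not by itself terminate this loop; you would need, say, a uniform bound on $|\mathbb{F}_1|$ independent of the particular separators chosen, and none is supplied. (There is also a boundary issue: when $\hat x\cap T=\emptyset$ your join is empty, so you must adjoin a bottom to $\mathbb{L}$ first.)

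The paper sidesteps the feedback entirely by a different compactness argument, working in the product space $\mathbb{M}^{\mathbb{L}}$ rather than in the Priestley dual. For each \emph{bad} map $x^\ast:\mathbb{F}_0\to\mathbb{M}$ --- one that does not extend to all of $\mathbb{L}$ --- compactness of $\mathbb{M}^{\mathbb{L}}$ yields a single finite sublattice $\mathbb{F}_1[x^\ast]\supset\mathbb{F}_0$ on which the non-extendibility is already witnessed. Since a finite lattice $\mathbb{F}_0$ has only finitely many surjective homomorphisms up to isomorphism of the target, one takes $\mathbb{F}_1$ to be the sublattice generated by the finitely many witnesses $\mathbb{F}_1[x^\ast]$. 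No iterative construction, no Priestley duality, and no interdependence to control.
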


\begin{proof}
		We start with a claim: If $\mathbb{M}$ is a finite lattice and $x^\ast:\mathbb{F}_0\To\mathbb{M}$ is a function which is not the restricion of any lattice homomorphism $z^\ast:\mathbb{L}\To \mathbb{M}$, then there exists a finite sublattice $\mathbb{F}_1[x^\ast]\subset \mathbb{L}$ that contains $\mathbb{F}_0$ and such that $x^\ast$ is not the restriction of any lattice homomorphism $y^\ast:\mathbb{F}_1[x^\ast]\To \mathbb{M}$. 
	
	Proof of the claim: For every finite subset $\mathbb{F}\subset \mathbb{L}$ that contains $\mathbb{F}_0$, consider the set
	\begin{eqnarray*}
		K_\mathbb{F} = \{ z^\ast:\mathbb{L}\To \mathbb{M} &:& z^\ast|_{\mathbb{F}_0} = x^\ast,\\ 
		& & z^\ast(a\wedge b) = z^\ast(a)\wedge z^\ast(b), \text{ for all } a,b\in\mathbb{F}, \\ 
		& & z^\ast(a\vee b) = z^\ast(a)\vee z^\ast(b), \text{ for all } a,b\in\mathbb{F}.   \}
	\end{eqnarray*}
	Since every finitely generated sublattice of a distributive lattice is finite, the negation of the claim above implies that $K_\mathbb{F} \neq\emptyset$ whenever $\mathbb{F}$ is finite. It is easy to check that $K_\mathbb{F}$ is a closed subset of $\mathbb{M}^\mathbb{L}$ (with the product topology of the discrete topology on $\mathbb{M}$). We also have that $\bigcap K_{\mathbb{F}^i} \supset K_{\bigcup\mathbb{F}^i}$ for any $\mathbb{F}^1,\ldots,\mathbb{F}^k$. Thus, the sets of the form $K_{\mathbb{F}}$ form a family of closeds subsets of $\mathbb{M}^\mathbb{L}$ with the finite intersection property. By compactness, there exists $z^\ast:\mathbb{L}\To \mathbb{M}$ that belongs to all sets $K_\mathbb{F}$. But then, $z^\ast$ is a lattice homomorphism with $z^\ast|_{\mathbb{F}_0} = x^\ast$ in contradiction with the hypothesis of the claim.
	
	Once the claim is proved, we return to the proof of the Lemma. First, let us notice that we can suppose that $\mathbb{F}_0$ is a finite sublattice of $\mathbb{L}$ and that $\mathbb{M}$ is finite. The first assumption is because we can pass to the sublattice generated by $\mathbb{F}_0$, and remember that every finitely generated distributive lattice is finite. The second assumption is because we can consider the restriction of $y^\ast$ onto its range. Let us say that two surjective lattice homomorphisms $x_1^\ast:\mathbb{F}_0\To \mathbb{M}_1$ and $x_2^\ast: \mathbb{F}_0\To \mathbb{M}_2$ are equivalent if there exists a lattice isomorphism $\phi:\mathbb{M}_1\To \mathbb{M}_2$ such that $\phi\circ x_1^\ast = x_2^\ast$. Clearly, there are only finitely many equivalence classes of such surjective lattice homomorphisms, so let $\mathcal{C} = \{x_1^\ast,x_2^\ast,\ldots,x_p^\ast\}$ be a finite list that contains a representative of each equivalence class. Let $\mathcal{C}'$ be the smallest list made of all the $x_i^\ast\in \mathcal{C}$ that are not the restriction of any lattice homomorphism $z^\ast:\mathbb{L}\To \mathbb{M}_i$. We can construct then $\mathbb{F}_1$ to be the sublattice of $\mathbb{L}$ generated by $\mathbb{F}_0$ and by all the $\mathbb{F}_1[x_i^\ast]$ for $x_i^\ast\in\mathcal{C}'$.
\end{proof}

Now, we consider the case when $\mathbb{L}$ is infinite. Again, we fix $g \in FBL(\mathbb{L})$, and have to show that $\norm{g}_{\mathcal{I}} \leq \norm{g\vert_{\mathbb{L}^\ast}}_\ast$.

For this proof it will be convenient to explicitly indicate the domain of the evaluation functions, so we write $\delta^\mathbb{L}_x:[-1,1]^\mathbb{L}\To \mathbb{R}$ for the function $\delta^\mathbb{L}_x(x^\ast) = x^\ast(x)$. We can suppose that $g$ can be written as $g = P(\delta^\mathbb{L}_{x_1}, \ldots, \delta^\mathbb{L}_{x_n})$ for some $x_1, \ldots, x_n \in \mathbb{L}$, where $P$ is a formula that involves linear combinations and the lattice operations $\wedge$ and $\vee$. This is because this kind of functions are dense in $FBL(\mathbb{L})$, that was generated by the functions $\delta^\mathbb{L}_x$ as a Banach lattice. Let $\mathbb{F}_0=\{x_1,\ldots,x_n\}$ and let $\mathbb{F}_1$ be the finite sublattice of $\mathbb{L}$ provided by Lemma~\ref{torpedo}. For any set $\mathbb{A}$ such that $\mathbb{F}_0 \subset \mathbb{A} \subset \mathbb{L}$, we consider
$$g^\mathbb{A} = P(\delta^\mathbb{A}_{x_1}, \ldots, \delta^\mathbb{A}_{x_n}):[-1,1]^\mathbb{A}\To \mathbb{R}$$

Claim X: If $\mathbb{A} \subset \mathbb{B}$ and $x^\ast\in [-1,1]^\mathbb{B}$, then $g^\mathbb{B}(x^\ast)  = g^\mathbb{A}(x^\ast|_\mathbb{A})$.

Proof of the claim: This is easily checked by induction on the complexity of the expression $P$. If $P$ is just a variable $P(u_1,\ldots,u_n) = u_i$, then we have the fact that $\delta^\mathbb{B}_{x_i}(x^\ast) = x^\ast(x_i) = \delta^\mathbb{A}_{x_i}(x^\ast|_\mathbb{A})$. And it is trivial that if the claim is satisfied by $P$ and $Q$, it is also satisfied for $P\wedge Q$, $P\vee Q$ and any linear combination of $P$ and $Q$. This finishes the proof of the claim.

Let $\mathcal{I}_1$ be the ideal of $FBL(\mathbb{F}_1)$ generated by the elements of the form $\delta^{\mathbb{F}_1}_{x\vee y} - \delta^{\mathbb{F}_1}_x \vee \delta^{\mathbb{F}_1}_y$ and $\delta^{\mathbb{F}_1}_{x\wedge y} - \delta^{\mathbb{F}_1}_x \wedge \delta^{\mathbb{F}_1}_y$. By the finite case that we already proved, we have that
 $$\norm{g^{\mathbb{F}_1}}_{\mathcal{I}_1} \leq \left\|g^{\mathbb{F}_1}\vert_{\mathbb{F}_1^\ast}\right\|_\ast.$$

Thus, it is enough to prove that $\norm{g}_{\mathcal{I}}\leq \norm{g^{\mathbb{F}_1}}_{\mathcal{I}_1}$ and that $\left\|g^{\mathbb{F}_1}\vert_{\mathbb{F}_1^\ast}\right\|_\ast \leq \| g \vert_{\mathbb{L}^\ast}\|_\ast$.

Let us see first that $\left\|g^{\mathbb{F}_1}\vert_{\mathbb{F}_1^\ast}\right\|_\ast \leq \| g \vert_{\mathbb{L}^\ast}\|_\ast$. We have that
$$\left\|g^{\mathbb{F}_1}\vert_{\mathbb{F}_1^\ast}\right\|_\ast = \sup \set{\sum_{i=1}^m\abs{g^{\mathbb{F}_1}(y_i^\ast)} : m \in \mathbb{N},  \text{ } y_i^\ast \in \mathbb{F}_1^\ast,\ \sup_{x \in \mathbb{F}_1} \sum_{i = 1}^m \abs{y_i^\ast(x)} \leq 1},$$ 

$$\| g \vert_{\mathbb{L}^\ast}\|_\ast = \sup \set{\sum_{i=1}^m\abs{g(z_i^\ast)} : m \in \mathbb{N},  \text{ } z_i^\ast \in \mathbb{L}^\ast, \text{ } \sup_{x \in \mathbb{L}} \sum_{i = 1}^m \abs{z_i^\ast(x)} \leq 1}.$$

We take a sum $\sum_{i=1}^m\abs{g^{\mathbb{F}_1}(y_i^\ast)}$ and we will find a sum $\sum_{i=1}^m\abs{g(z_i^\ast)}$  like in the second  supremum with the same value. Consider $$\mathbb{M} = \{(y_1^\ast(x),\ldots,y_m^\ast(x)) : x\in \mathbb{F}_1 \} \subset [-1,1]^m.$$ Notice that, since each $y_i^\ast$ is a lattice homomorphism, the set $\mathbb{M}$ is a sublattice of $\mathbb{R}^m$ and we have a lattice homomorphism $y^\ast:\mathbb{F}_1\To \mathbb{M}$ given by $y^\ast(x) = (y_1^\ast(x),\ldots,y_m^\ast(x))$. Also, since we are assuming that the $y_i^\ast$ are as in the supremum above, we have that $\sum_{i=1}^m |\xi_i| \leq 1$ whenever $(\xi_1,\ldots,\xi_m)\in \mathbb{M}$. We are in a position to apply Lemma~\ref{torpedo}, and we find a lattice homomorphism $z^\ast:\mathbb{L}\To\mathbb{M}\subset [-1,1]^m$ such that $z^\ast|_{\mathbb{F}_0} = y^\ast|_{\mathbb{F}_0}$. Write $z^\ast(x) = (z_1^\ast(x),\ldots,z_m^\ast(x))$, so that we have $z_1^\ast,\ldots,z_m^\ast\in \mathbb{L}^\ast$. Since the range of $z^\ast$ is inside $\mathbb{M}$, we have that $\sum_{i = 1}^m \abs{z_i^\ast(x)} \leq 1$ for all $x\in\mathbb{L}$. Finally, using Claim X above 
$$
\sum_{i=1}^m\abs{g(z_i^\ast)} = \sum_{i=1}^m\abs{g^\mathbb{L}(z_i^\ast)} = 
\sum_{i=1}^m\abs{g^{\mathbb{F}_0}(z_i^\ast|_{\mathbb{F}_0})} = 
\sum_{i=1}^m\abs{g^{\mathbb{F}_0}(y_i^\ast|_{\mathbb{F}_0})} =
\sum_{i=1}^m\abs{g^{\mathbb{F}_1}(y_i^\ast)},
$$
as required.

Now, we prove the remaining inequality $\norm{g}_{\mathcal{I}} \leq \| g^{\mathbb{F}_1} \|_{\mathcal{I}_1}$. In this proof, it will be useful to use a subindex on norms to indicate in which free Banach lattice these norms are calculated. Remember that

\begin{eqnarray*}
\norm{g}_{\mathcal{I}} &=& \inf \set{ \norm{f}_{FBL(\mathbb{L})} : f \in FBL(\mathbb{L}), \text{ }f -g \in {\mathcal{I}} },\\
\| g^{\mathbb{F}_1} \|_{\mathcal{I}_1} &=& \inf \set{\norm{h}_{FBL(\mathbb{F}_1)} : h \in FBL(\mathbb{F}_1), \text{ }h - g^{\mathbb{F}_1}\in \mathcal{I}_1},
\end{eqnarray*} where 

$$ \norm{f}_{FBL(\mathbb{L})} = \sup \set{\sum_{i = 1}^m \abs{ f(z_{i}^{\ast})} : m \in \mathbb{N}, \text{ } z_i^{\ast} \in [-1,1]^{\mathbb{L}}, \text{ }\sup_{x \in \mathbb{L}} \sum_{i=1}^m \abs{z_i^{\ast}(x)} \leq 1 },$$
$$\norm{h}_{FBL(\mathbb{F}_1)} = \sup \set{\sum_{i = 1}^m \abs{ h(y_{i}^{\ast})} : m \in \mathbb{N}, \text{ } y_i^\ast \in [-1,1]^{\mathbb{F}_1}, \text{ }\sup_{x \in \mathbb{F}_1} \sum_{i=1}^m \abs{y_i^{\ast}(x)} \leq 1 }.$$

Thus, the question is if given $h \in FBL(\mathbb{F}_1)$ such that $h-g^{\mathbb{F}_1}\in\mathcal{I}_1$, there exists $f \in FBL(\mathbb{L})$ such that $f-g\in  {\mathcal{I}}$ and $\norm{f}_{FBL(\mathbb{L})} \leq \norm{h}_{FBL(\mathbb{F}_1)}$.

For every $h:[-1,1]^{\mathbb{F}_1}\To \mathbb{R}$, we consider $e(h):[-1,1]^{\mathbb{L}}\To \mathbb{R}$ given by $e(h)(z^\ast) = h(z^\ast|_{\mathbb{F}_1})$. It is clear that $e(\delta_x^{\mathbb{F}_1}) = \delta_x^{\mathbb{L}}$, and $e$ preserves linear combinations, the lattice operations and $\|e(h)\|_{FBL(\mathbb{L})} = \|h\|_{FBL(\mathbb{F}_1)}$. Thus, we can view $e$ as a Banach lattice homomorphism $e:FBL(\mathbb{F}_1)\To FBL(\mathbb{L})$ that preserves the norm.

Now, we see that $f = e(h)$ is what we are looking for. It only remains to check that $f - g \in {\mathcal{I}}$. We know that $h - g^{\mathbb{F}_1}\in {\mathcal{I}_1}$, which is the ideal generated by $$\set{\delta^{\mathbb{F}_1}_{x\vee y} - \delta^{\mathbb{F}_1}_x \vee \delta^{\mathbb{F}_1}_y,\ \delta^{\mathbb{F}_1}_{x\wedge y} - \delta^{\mathbb{F}_1}_x \wedge \delta^{\mathbb{F}_1}_y : x, y \in \mathbb{F}_1}.$$

Therefore, $e(h) - e(g^{\mathbb{F}_1})$ is in the ideal generated by 
$$ \set{e\left(\delta^{\mathbb{F}_1}_{x\vee y} - \delta^{\mathbb{F}_1}_x \vee \delta^{\mathbb{F}_1}_y\right),\ e\left(\delta^{\mathbb{F}_1}_{x\wedge y} - \delta^{\mathbb{F}_1}_x \wedge \delta^{\mathbb{F}_1}_y\right) : x, y \in \mathbb{F}_1}. $$ $$ = \set{\delta^{\mathbb{L}}_{x\vee y} - \delta^{\mathbb{L}}_x \vee \delta^{\mathbb{L}}_y,\ \delta^{\mathbb{L}}_{x\wedge y} - \delta^{\mathbb{L}}_x \wedge \delta^{\mathbb{L}}_y : x, y \in \mathbb{F}_1}.$$

Notice that $e(g^{\mathbb{F}_1}) = g$ by Claim X above. So we conclude that $e(h) - e(g^{\mathbb{F}_1}) = f-g \in \mathcal{I}$ as required.

\section{Chain conditions on the free Banach lattice of a linear order}\label{linearsection}

Throughout this section $\mathbb{L}$ is a linearly ordered set, which is a particular case of a lattice, and $FBL\langle\mathbb{L}\rangle = FBL_\ast\langle\mathbb{L}\rangle$ is the free Banach lattice generated by $\mathbb{L}$, in the concrete form described in Theorem~\ref{main}. From now on, for $x\in\mathbb{L}$, we will denote the evaluation maps as $\delta_x:\mathbb{L}^\ast\To \mathbb{R}$ instead of $\dot{\delta}_x$, as we do not need to distinguish it anymore from other evaluation maps. A Banach lattice $X$ satisfies the \textit{countable chain condition} (\textit{ccc}), if whenever $\set{f_i : i \in I}\subset X$ are positive elements and $f_i \wedge f_j = 0$ for all $i \neq j$, then we must have that $\abs{I}$ is countable. This section is devoted to the proof of the following result:

\begin{thm}\label{FBLinearccc}
For $\mathbb{L}$ linearly ordered, $FBL\langle\mathbb{L}\rangle$ has the countable chain condition if and only if $\mathbb{L}$ is order-isomorphic to a subset of the real line.
\end{thm}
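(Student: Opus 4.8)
The plan is to reduce the Banach-lattice ccc to a topological statement about the compact space $\mathbb{L}^\ast$ and then analyse that space order-theoretically. Since $\mathbb{L}$ is linearly ordered, a map $x^\ast:\mathbb{L}\To[-1,1]$ is a lattice homomorphism exactly when it is order preserving, so $\mathbb{L}^\ast$ is the space of monotone $[-1,1]$-valued functions on $\mathbb{L}$, a closed (hence compact) subspace of $[-1,1]^\mathbb{L}$. Taking $n=1$ in the definition of $\norm{\cdot}_\ast$ gives $\norm{f}_\infty\le\norm{f}_\ast$, so every element of $FBL\langle\mathbb{L}\rangle$ is a continuous function on $\mathbb{L}^\ast$ and the lattice operations are pointwise. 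Consequently, for positive $f,g$ one has $f\wedge g=0$ iff the open sets $\set{f>0}$ and $\set{g>0}$ are disjoint. Thus $FBL\langle\mathbb{L}\rangle$ fails the ccc iff there is an uncountable family of pairwise disjoint nonempty open subsets of $\mathbb{L}^\ast$ that occur as supports of positive elements. By positive homogeneity these supports are cones, and to realize them it suffices to use finite lattice combinations of the half-space cones $\set{\sum_i c_i\delta_{t_i}>0}$; in particular both sign conditions and ratio conditions on the coordinates are available (e.g.\ $(\delta_a)^+\wedge(\delta_a-\tfrac1{10}\delta_b)^+\wedge(\tfrac15\delta_b-\delta_a)^+$ is a positive element supported on $\set{0<x^\ast(a),\ \tfrac1{10}x^\ast(b)<x^\ast(a)<\tfrac15x^\ast(b)}$).

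For the forward implication, suppose $\mathbb{L}$ is order-isomorphic to a subset of $\mathbb{R}$, equivalently $\mathbb{L}$ has a countable order-dense subset $D$. I would then exhibit a countable dense subset of $\mathbb{L}^\ast$ \emph{directly}, namely the monotone step functions taking finitely many rational values with jumps located at points of $D$ together with the (necessarily countably many) jump-points of $\mathbb{L}$. A routine approximation on a basic neighbourhood, depending on finitely many coordinates $t_1<\dots<t_k$, shows this set is dense. Hence $\mathbb{L}^\ast$ is separable, therefore ccc, so it carries only countably many pairwise disjoint nonempty open sets, and by the reduction above $FBL\langle\mathbb{L}\rangle$ is ccc. (I avoid arguing through an ambient separable Helly space, since separability passes to subspaces only with care.)

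For the converse I argue by contraposition: assuming $\mathbb{L}$ has no countable order-dense subset, I produce an uncountable antichain of positive elements. The easy case is when $\mathbb{L}$ is not ccc \emph{as an order}, i.e.\ there is an uncountable family of pairwise disjoint nonempty open intervals $(a_\alpha,b_\alpha)$. Put $h_\alpha=(\delta_{b_\alpha})^+\wedge(-\delta_{a_\alpha})^+$, a positive element with support $\set{x^\ast(a_\alpha)<0<x^\ast(b_\alpha)}$. If $b_\alpha\le a_\beta$ then any $x^\ast$ in both supports would satisfy $0<x^\ast(b_\alpha)\le x^\ast(a_\beta)<0$, a contradiction; since a monotone function changes sign at most once, the $h_\alpha$ are pairwise disjoint, so $FBL\langle\mathbb{L}\rangle$ is not ccc.

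The main obstacle is the remaining case, where $\mathbb{L}$ is ccc as an order yet not separable (the Suslin situation). Here no family of pairwise disjoint intervals is uncountable, so the sign construction above is unavailable, and one must exploit the finer homogeneous conditions, in particular comparisons of increments such as $\set{x^\ast(b)-x^\ast(a)>\lambda\,(x^\ast(d)-x^\ast(c))}$. The strategy I would pursue is: first extract from non-separability an uncountable left-separated family $\set{x_\alpha:\alpha<\omega_1}$, with intervals $(a_\alpha,b_\alpha)\ni x_\alpha$ omitting all earlier $x_\beta$; then, after a $\Delta$-system and pressing-down reduction normalizing the combinatorial type, attach to each $\alpha$ a positive element, built from sign and ratio conditions at $a_\alpha<x_\alpha<b_\alpha$, that forces the bulk of the increase of $x^\ast$ to occur precisely across $x_\alpha$. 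The crux, and the step I expect to be genuinely delicate, is verifying pairwise disjointness of these supports despite the enormous flexibility of monotone functions: one must show that the normalized increment/ratio constraints for two distinct indices cannot be satisfied simultaneously by any single monotone $x^\ast$, using left-separation to rule out the offending nested configurations. This is the heart of the argument and the place where the hypothesis of having no countable order-dense subset must be used in full force.
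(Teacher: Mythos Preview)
Your forward implication is essentially the paper's argument: the paper exhibits a countable $\pi$-basis for $\mathbb{L}^\ast$ using coordinates from a countable dense set containing all leap-points, which is the same idea as your countable dense family of rational step functions. That direction is fine.

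The backward implication, however, has a genuine gap. First, your case analysis is incomplete: you split ``no countable order-dense subset'' into ``not ccc as an order'' and ``ccc but not separable (Suslin)'', but this misses linearly ordered sets that are ccc and separable in the order topology yet have uncountably many leaps --- the split interval $[0,1]\times\{0,1\}$ with the lexicographic order is the standard example. Such orders do not embed in $\mathbb{R}$, are not Suslin lines, and your left-separated extraction from ``non-separability'' does not apply to them. Second, even in the genuine Suslin case, what you present is a strategy (left-separation, $\Delta$-system, pressing down, unspecified ratio conditions) rather than a proof; you explicitly flag the disjointness verification as ``the heart of the argument'' and leave it undone.

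The paper closes both gaps with a single combinatorial lemma: a linear order embeds in $\mathbb{R}$ if and only if every uncountable family of triples $x_1^i<x_2^i<x_3^i$ contains two triples whose middle points lie in each other's outer intervals. When the order does not embed in $\mathbb{R}$, one therefore gets an uncountable family of triples with the opposite property, uniformly covering the non-ccc, Suslin, and split-interval situations. Assuming in addition a maximum $M$ (the general case is reduced to this via a sublattice lemma and a reflection $\mathbb{L}\mapsto\overleftarrow{\mathbb{L}}$), the paper attaches to each triple the element
\[
h_i \;=\; 0 \vee \Bigl(\delta_{x_1^i}\wedge\bigl(\delta_{x_2^i}-\delta_{x_1^i}-0.4\,\delta_M\bigr)\wedge\bigl(\delta_{x_3^i}-\delta_{x_2^i}-0.4\,\delta_M\bigr)\Bigr),
\]
which forces at least $0.4\,x^\ast(M)$ of increase on each of $[x_1^i,x_2^i]$ and $[x_2^i,x_3^i]$. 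Disjointness is then a one-line telescoping estimate: if $h_i\wedge h_j>0$, the failure of the triples condition forces three non-overlapping subintervals each carrying more than $0.4\,x^\ast(M)$ of increase below $M$, contradicting monotonicity. This is exactly the ``bulk of the increase'' idea you gesture at, but the triples lemma is what makes it go through cleanly, and it is the missing ingredient in your proposal.
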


We first state a couple of lemmas:

\begin{lem}\label{reallines}
For a linearly ordered set $\mathbb{L}$ the following are equivalent:
\begin{enumerate}
\item $\mathbb{L}$ is order-isomorphic to a subset of the real line.
\item $\mathbb{L}$ is separable in the order topology, and the set of leaps $\{(a,b) \in \mathbb{L}^2 : [a,b]=\{a,b\}\}$ is countable.
\item For every uncountable family of triples $$\mathcal{F} = \big\lbrace \lbrace x_1^{i}, x_2^i, x_3^{i} \rbrace : x_1^{i}, x_2^i, x_3^{i} \in \mathbb{L},\, x_1^{i}< x_2^i < x_3^{i},\, i \in J \big\rbrace$$ there exist $i \neq j$ such that $x_1^i\leq x_2^j\leq x_3^i$ and $x_1^j \leq x_2^i\leq x_3^j$.
\end{enumerate}
\end{lem}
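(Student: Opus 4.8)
The plan is to prove a cycle of implications $(1)\Rightarrow(2)\Rightarrow(3)\Rightarrow(1)$, treating each characterization of ``embeddability into $\mathbb{R}$'' as a separate, mostly self-contained step.

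For $(1)\Rightarrow(2)$: if $\mathbb{L}$ embeds into $\mathbb{R}$ as an ordered set, then separability in the order topology is inherited from the separability of $\mathbb{R}$ (the order topology on a subset of $\mathbb{R}$ coincides with the subspace topology, which is second countable), and the countability of the set of leaps follows because each leap $(a,b)$ determines a nonempty open interval $(a,b)$ of $\mathbb{R}$ disjoint from $\mathbb{L}$, and these gap-intervals are pairwise disjoint, so there can be only countably many. A small point to watch is that the order topology on $\mathbb{L}$ need not a priori coincide with the subspace topology from $\mathbb{R}$, so I would either argue this coincidence directly or note that separability passes through the order-embedding.

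For $(2)\Rightarrow(3)$: assume $\mathbb{L}$ is separable in the order topology with countably many leaps, fix a countable dense set $D$, and suppose toward a contradiction that some uncountable family $\mathcal{F}$ of triples has the property that no two indices ``interleave'' in the required way. The idea is that each triple $\{x_1^i<x_2^i<x_3^i\}$ carries a middle point $x_2^i$ together with the open interval $(x_1^i,x_3^i)$ surrounding it; using density of $D$ I would attach to each $i$ a point of $D$ lying in $(x_1^i,x_2^i)$ and one in $(x_2^i,x_3^i)$ — except when these intervals are empty, which forces a leap and is controlled by the countability-of-leaps hypothesis. Since $D$ is countable, uncountably many triples share the same pair of witnesses from $D$, and for such triples I would derive the interleaving $x_1^i\leq x_2^j\leq x_3^i$ and $x_1^j\leq x_2^i\leq x_3^j$ directly from the ordering relations forced by the common witnesses, contradicting the assumption. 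I expect this to be the main obstacle, because the bookkeeping around empty intervals (leaps) and the precise combinatorial extraction via the pigeonhole principle on the countable set $D$ is where the argument is delicate; getting a uniform witness scheme that simultaneously produces both interleaving inequalities is the crux.

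For $(3)\Rightarrow(1)$, I would prove the contrapositive: if $\mathbb{L}$ does not embed into $\mathbb{R}$, then by the negation of $(2)$ either $\mathbb{L}$ is not separable in the order topology or it has uncountably many leaps, and in each case I would construct an uncountable family of triples violating the interleaving condition. In the non-separable case, a standard transfinite recursion produces an uncountable family of pairwise disjoint nonempty open intervals $(a_i,b_i)$; choosing a midpoint $x_2^i\in(a_i,b_i)$ and setting $x_1^i=a_i$, $x_3^i=b_i$ (after discarding endpoints that don't exist by a minor adjustment) gives triples whose surrounding intervals are pairwise disjoint, so no interleaving can occur. In the uncountably-many-leaps case, the leaps themselves give disjoint ``two-point'' configurations that can be padded into triples with the same disjointness property. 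I would close by remarking that since I have shown the negation of $(1)$ implies the negation of $(3)$, the cycle is complete and the three conditions are equivalent.
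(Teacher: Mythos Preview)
Your $(1)\Rightarrow(2)$ and $(2)\Rightarrow(3)$ sketches are essentially the paper's approach and are fine (the paper also pigeonholes into $D^2$ and handles empty intervals via leaps). The problem is in $(3)\Rightarrow(1)$.

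You argue the contrapositive through $\neg(2)$, splitting into ``non-separable'' and ``uncountably many leaps''. In the non-separable case you assert that ``a standard transfinite recursion produces an uncountable family of pairwise disjoint nonempty open intervals''. This is false: a Suslin line is non-separable in the order topology yet every family of pairwise disjoint open intervals is countable (that is precisely the ccc). So no such recursion exists in general, and your construction of a bad family of triples breaks down exactly at the interesting case. The paper itself flags this point right after the lemma, noting that the \emph{pairs} version of condition~(3) is satisfied by a connected Suslin line; the triples are essential, and the argument must exploit them. The paper's route is quite different from yours: it proves $(3)\Rightarrow(2)$ directly by taking (via Zorn's lemma) a \emph{maximal} family $\mathcal{F}$ of triples that fails the interleaving condition; by~(3) this $\mathcal{F}$ is countable, and the set of all coordinates appearing in $\mathcal{F}$ together with all leap endpoints is then shown to be dense by maximality. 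This sidesteps the Suslin obstruction entirely.

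Your ``uncountably many leaps'' case also needs more care than you indicate: leaps can be adjacent (e.g.\ $(a,b)$ and $(b,c)$), so they are not automatically disjoint configurations. The paper handles this by declaring two leaps equivalent when they are linked by a finite chain of adjacent leaps; each equivalence class is countable, so uncountably many leaps yield uncountably many pairwise \emph{non-equivalent} leaps, and it is for these that one can pad to triples and derive the contradiction. Finally, note a small structural slip: your ``cycle'' $(1)\Rightarrow(2)\Rightarrow(3)\Rightarrow(1)$ is not self-contained, since your $(3)\Rightarrow(1)$ step invokes $\neg(1)\Rightarrow\neg(2)$, i.e.\ $(2)\Rightarrow(1)$, which you have not proved; you should either treat $(1)\Leftrightarrow(2)$ as the known folklore (as the paper does) or supply that direction.
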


\begin{proof}
The equivalence of (1) and (2) is easy and is well known folklore, cf. \cite[Corollary 3.1]{Todlines}. Assume now (2) and let us prove (3). Take a countable dense subset $D \subset \mathbb{L}$ that contains all the leaps $\set{(a,b) \in \mathbb{L}^2 : [a,b] = \{a,b\}}\subset D$. Let $f: \mathcal{F} \longrightarrow D^{2}$ be the map given by $f(x^i_1,x^i_2,x^i_3) = (d_1,d_2)$, where $d_k$ is an element of $D$ such that $x^i_k < d_k < x^i_{k+1}$ if such an element exists, and $d_k = x^i_k$ otherwise, when $x^i_k,x^i_{k+1}\in D$ form a leap. Since $\mathcal{F}$ is uncountable and ${D}^{2}$ is countable, there exists an uncountable $\mathcal{F}_0 \subseteq \mathcal{F}$ such that $f\vert_{\mathcal{F}_0}$ is constant. Any pair of distinct elements $\lbrace x_1^i, x_2^i, x_3^i \rbrace$, $\lbrace x_1^j, x_2^j, x_3^j \rbrace \in \mathcal{F}_0$ is as required because we can interpolate $x_k^u \leq d_k < x_{k+1}^v$. Let us prove now that (3) implies (2). First, let us see that the set of leaps is countable. Let us say two leaps $(a,b)$ and $(a',b')$ are equivalent if there exist $c_0<c_1<\cdots<c_p$ finitely many elements of $\mathbb{L}$ such that each $(c_k,c_{k+1})$ is a leap and either $c_0=a$ and $c_p=b'$, or $c_0=a'$ and $c_p=b$. It is clear that each equivalence class of leaps is countable. So if there were uncountably many leaps, we could find an uncountable family $\mathcal{G} = \{ \{x_1^i,x_2^i\} :  i\in J\}$ of nonequivalent leaps $x_1^i < x_2^i$. We can asume that $x_2^i$ is never the maximum of $\mathbb{L}$ and we choose an arbitrary $x_3^i>x_2^i$. Applying (3) to the family $\mathcal{F} = \{ \{x_1^i,x_2^i,x_3^i\} :  i\in J\}$ , we could find $i\leq j$ such that $x_1^{i} \leq x_2^j$ and $x_1^j \leq x_2^{i}$. But when we have two nonequivalent leaps, one has to be strictly to the right of the other, so either $x_2^j < x_1^i$ or $x_2^i < x_1^j$, a contradiction. Now we prove that $\mathbb{L}$ is separable. Using Zorn's lemma, we can find a maximal family $\mathcal{F}$ that fails the property stated in (3). This family must be then countable. Let $D$ be the set of all elements of $\mathbb{L}$ that either appear in some triple of the family $\mathcal{F}$ or are one of the two sides of a leap. We know now that $D$ is countable. Let us check that it is dense. Take a nonempty open interval $(a,b)\subset \mathbb{L}$. If the interval $(a,b)$ is finite, then all its elements are parts of leaps, so it intersects $D$. Suppose that $(a,b)$ is infinite but does not intersect $D$. Then if we pick $a<x_1<x_2<x_3<b$, then the triple $\{x_1,x_2,x_3\}$ could be added to $\mathcal{F}$, in contradiction with its maximality.
\end{proof}

We notice that the use of triples in Lemma~\ref{reallines} is essential. The analogous property of condition (3) for couples instead of triples would be that for every uncountable family $\mathcal{F} = \{\{x^i_1,x^i_2\} : x^i_1<x^i_2\}$ there are $i\neq j$ such that $x^i_1\leq x^j_2$ and $x^j_1\leq x^i_2$. A connected Suslin line has this weaker property but it does not embed inside the real line. 

\begin{lem}\label{sublinear}
Let $\mathbb{L}\subset\mathbb{M}$ be two linearly ordered sets. Then $FBL\langle\mathbb{L}\rangle$ is isometric to a closed sublattice of $FBL\langle\mathbb{M}\rangle$.
\end{lem}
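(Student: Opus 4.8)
The plan is to exhibit the embedding explicitly inside the functional model of Theorem~\ref{main}. Since any subset of a chain is a sublattice, the inclusion $\mathbb{L}\subset\mathbb{M}$ is a lattice homomorphism, so $x\mapsto\delta_x^{\mathbb{M}}$ is a bounded lattice homomorphism $\mathbb{L}\To FBL\langle\mathbb{M}\rangle$ of norm $\sup_{x\in\mathbb{L}}\norm{\delta_x^{\mathbb{M}}}_\ast=1$ (the value $1$ is attained because the indicator of any up-set is a lattice homomorphism into $[-1,1]$). By the universal property in Definition~\ref{FBLGBL} there is a unique Banach lattice homomorphism $T:FBL\langle\mathbb{L}\rangle\To FBL\langle\mathbb{M}\rangle$ with $T(\delta_x^{\mathbb{L}})=\delta_x^{\mathbb{M}}$ and $\norm{T}=1$; in particular $\norm{T(f)}_\ast\le\norm{f}_\ast$ for all $f$. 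I claim $T$ is an isometry. Granting this, its image is complete, hence closed, and it is a sublattice because $T$ is a lattice homomorphism, which is exactly the assertion.

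It remains to prove the reverse inequality $\norm{f}_\ast\le\norm{T(f)}_\ast$. As functions of the form $f=P(\delta_{x_1}^{\mathbb{L}},\dots,\delta_{x_n}^{\mathbb{L}})$ (with $P$ a lattice-linear expression and $x_j\in\mathbb{L}$) are dense and both norms depend continuously on $f$, it suffices to treat such $f$, in which case $T(f)=P(\delta_{x_1}^{\mathbb{M}},\dots,\delta_{x_n}^{\mathbb{M}})$. Inspecting the formula for $\norm{f}_\ast$, the task reduces to the following extension statement: given $z_1^\ast,\dots,z_m^\ast\in\mathbb{L}^\ast$ with $\sup_{x\in\mathbb{L}}\sum_i\abs{z_i^\ast(x)}\le1$, there exist $w_1^\ast,\dots,w_m^\ast\in\mathbb{M}^\ast$ with $\sup_{y\in\mathbb{M}}\sum_i\abs{w_i^\ast(y)}\le1$ and $w_i^\ast|_{\mathbb{L}}=z_i^\ast$. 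Indeed, such $w_i^\ast$ are admissible in the supremum defining $\norm{T(f)}_\ast$ and give $\sum_i\abs{f(z_i^\ast)}=\sum_i\abs{T(f)(w_i^\ast)}\le\norm{T(f)}_\ast$; taking the supremum over admissible families $(z_i^\ast)$ then yields the inequality.

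The first observation is that, for chains, a map into $[-1,1]$ is a lattice homomorphism exactly when it is monotone nondecreasing; thus each $z_i^\ast$ is nondecreasing, and every nondecreasing extension to $\mathbb{M}$ again lies in $\mathbb{M}^\ast$. I would define, for $y\in\mathbb{M}$, $w_i^\ast(y)=\sup\set{z_i^\ast(x):x\in\mathbb{L},\ x\le y}$ when this set is nonempty, and $w_i^\ast(y)=\inf\set{z_i^\ast(x):x\in\mathbb{L}}$ on the initial set of those $y$ lying below all of $\mathbb{L}$. This visibly restricts to $z_i^\ast$ on $\mathbb{L}$, takes values in $[-1,1]$, and is nondecreasing, so $w_i^\ast\in\mathbb{M}^\ast$.

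The heart of the matter, which I expect to be the main obstacle, is verifying that the coupling constraint $\sum_i\abs{w_i^\ast(y)}\le1$ survives, since the suprema defining the various $w_i^\ast(y)$ are a priori attained at different points of $\mathbb{L}$. Monotonicity resolves this: as $x$ increases the positive parts $(z_i^\ast(x))_+$ increase while the negative parts $(z_i^\ast(x))_-$ decrease. Fix $y$ with nonempty down-set and $\varepsilon>0$; for each index $i$ with $w_i^\ast(y)\ge0$ pick $x_i\le y$ with $z_i^\ast(x_i)>w_i^\ast(y)-\varepsilon/m$, and set $x^\ast=\max_i x_i\le y$ (a finite maximum, hence in $\mathbb{L}$). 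For indices with $w_i^\ast(y)\ge0$, monotonicity gives $\abs{w_i^\ast(y)}<z_i^\ast(x^\ast)+\varepsilon/m\le(z_i^\ast(x^\ast))_++\varepsilon/m$; for indices with $w_i^\ast(y)<0$ one has $z_i^\ast(x)<0$ throughout the down-set, so $\abs{w_i^\ast(y)}=\inf_{x\le y}(z_i^\ast(x))_-\le(z_i^\ast(x^\ast))_-$. Summing over $i$ yields $\sum_i\abs{w_i^\ast(y)}\le\sum_i\abs{z_i^\ast(x^\ast)}+\varepsilon\le1+\varepsilon$, and letting $\varepsilon\to0$ gives the desired bound; the initial-segment case is handled symmetrically using infima over up-sets. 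This establishes that $T$ is an isometry and completes the proof.
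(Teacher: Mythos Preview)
Your proof is correct and follows essentially the same strategy as the paper: both produce the map $T$ sending $\delta_x^{\mathbb{L}}$ to $\delta_x^{\mathbb{M}}$ (you via the universal property, the paper via the explicit restriction formula $\varphi(f)(x^\ast)=f(x^\ast|_{\mathbb{L}})$, which is the same map), and both prove the nontrivial inequality $\norm{f}_\ast\le\norm{T(f)}_\ast$ by extending admissible tuples $(z_i^\ast)\in(\mathbb{L}^\ast)^m$ to admissible tuples in $(\mathbb{M}^\ast)^m$ via the monotone sup-extension. The only cosmetic difference is in checking that $\sum_i\abs{w_i^\ast(y)}\le1$: the paper argues by contradiction, approximating every $w_i^\ast(y)$ within $\varepsilon/m$ at a common point $\tilde y\in\mathbb{L}$, while you split by sign and exploit the monotonicity of the positive and negative parts separately; both are the same $\varepsilon$-argument in slightly different dress. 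One tiny edge case you leave implicit: if every $w_i^\ast(y)<0$ your $x^\ast=\max_i x_i$ is vacuously undefined, but then any $x^\ast\in\mathbb{L}$ with $x^\ast\le y$ works and the negative-part bound alone already gives $\sum_i\abs{w_i^\ast(y)}\le\sum_i\abs{z_i^\ast(x^\ast)}\le1$.
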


\begin{proof}
We will prove that, in fact, $FBL\langle \mathbb{L} \rangle$ is isometric to the closure of the vector lattice generated by the image of $\mathbb{L}$ via the inclusion mapping of $\mathbb{M}$ inside $FBL \langle \mathbb{M} \rangle$. That is, if we denote by $\delta^{\mathbb{M}}: \mathbb{M} \longrightarrow FBL\langle \mathbb{M} \rangle$ the inclusion of $\mathbb{M}$ inside $FBL\langle \mathbb{M} \rangle$ (given by $\delta^{\mathbb{M}}(x)(x^*) = \delta_x^{\mathbb{M}}(x^*) = x^*(x)$, for every $x \in \mathbb{M}$, $x^* \in \mathbb{M}^*$), we have that $$FBL \langle \mathbb{L} \rangle \cong \overline{lat}^{\norm{\cdot}_\ast}\set{\delta_x^{\mathbb{M}} : x \in \mathbb{L}} \subset FBL\langle \mathbb{M} \rangle.$$
To prove that, let us also denote by $\delta^{\mathbb{L}}: \mathbb{L} \longrightarrow FBL\langle \mathbb{L} \rangle$ the inclusion of $\mathbb{L}$ inside $FBL\langle \mathbb{L} \rangle$ in a similar way to $\delta^{\mathbb{M}}$, and let $\varphi : \mathbb{R}^{\mathbb{L}^*} \longrightarrow \mathbb{R}^{\mathbb{M}^*}$ the map given by $\varphi (f)(x^*) = f(x^* \vert_{\mathbb{L}})$, for every $f \in \mathbb{R}^{\mathbb{L}^*}$, $x^* \in \mathbb{M}^*$.

It is clear that the function $\varphi$ commutes with linear combinations and the lattice operations, and that $\norm{\varphi(f)}_* \leq \norm{f}_*$ for every $f \in \mathbb{R}^{\mathbb{L}^*}$, where 

\begin{eqnarray*}
\norm{f}_* & = & \sup \set{\sum_{i = 1}^n \abs{f(x_{i}^{*})} : n \in \mathbb{N}, \text{ } x_1^{*}, \ldots, x_n^{*} \in \mathbb{L}^{*},\text{ }\sup_{x \in \mathbb{L}} \sum_{i=1}^n \abs{x_i^{*}(x)} \leq 1 },
\end{eqnarray*}

\begin{eqnarray*}
\norm{\phi(f)}_* & = & \sup \set{\sum_{i = 1}^n \abs{\phi(f)(y_{i}^{*})} : n \in \mathbb{N}, \text{ } y_1^{*}, \ldots, y_n^{*} \in \mathbb{M}^{*}, \text{ }\sup_{x \in \mathbb{M}} \sum_{i=1}^n \abs{y_i^{*}(x)} \leq 1 } \\
& = & \sup \set{\sum_{i = 1}^n \abs{f(y_{i}^{*}\vert_{\mathbb{L}})} : n \in \mathbb{N}, \text{ } y_1^{*}, \ldots, y_n^{*} \in \mathbb{M}^{*}, \text{ },\text{ }\sup_{x \in \mathbb{M}} \sum_{i=1}^n \abs{y_i^{*}(x)} \leq 1 }.
\end{eqnarray*}
Moreover, $\varphi(\delta_x^{\mathbb{L}}) = \delta_x^{\mathbb{M}}$ for every $x \in \mathbb{L}$. Thus, to see that $\varphi$ gives an isometry from $FBL\langle \mathbb{L} \rangle$ onto $\overline{lat}^{\norm{\cdot}_\ast}\set{\delta_x^{\mathbb{M}} : x \in \mathbb{L}} \subset FBL\langle \mathbb{M} \rangle$, it only remains to prove that we also have $\norm{f}_* \leq \norm{\varphi (f)}_*$ for every $f \in FBL\langle \mathbb{L} \rangle$. 

First, observe that $$\mathbb{L}^\ast = \set{x^\ast: \mathbb{L} \longrightarrow [-1,1] : u\leq v \Rightarrow x^\ast(u)\leq x^\ast(v) }$$ and $$\mathbb{M}^\ast = \set{x^\ast: \mathbb{M} \longrightarrow [-1,1] : u\leq v \Rightarrow x^\ast(u)\leq x^\ast(v) }.$$ 

Fix $f \in FBL\langle \mathbb{L} \rangle$ and let $x_1^*, \ldots, x_n^* \in \mathbb{L}^*$ like in the expression of the norm $\norm{f}_*$. Let $\gamma : \mathbb{L}^* \longrightarrow \mathbb{M}^*$ be the map given by 
\begin{equation*}
     \gamma(x^*)(x) = \left\{
	       \begin{array}{ll}
		 \sup \set{x^*(y) : y \in \mathbb{L}, y \leq x}      & \mathrm{if\ } \text{there exists }y \in \mathbb{L} \text{ with }y \leq x, \\
		\inf \set{x^*(y) : y \in \mathbb{L}, y \geq x}      & \text{otherwise},\\
	       \end{array}
	     \right.
\end{equation*}
for every $x^* \in \mathbb{L}^*$, $x \in \mathbb{M}$.

Put $y_i^* := \gamma(x_i^*) \in \mathbb{M}^*$ for every $i = 1, \ldots, n$, and let us see that $y_1^*, \ldots, y_n^*$ are like in the expression of the norm $\norm{\varphi(f)}_*$ satisfying that $\sum_{i=1}^n|f(x_i^*)| \leq \sum_{i=1}^n|f(y_i^*\vert_{\mathbb{L}})|.$ 

Since $\gamma(x^*) \vert_{\mathbb{L}} = x^*$ for every $x^* \in \mathbb{L}^*$, we have that $$f(y_i^* \vert_\mathbb{L}) = f(\gamma(x_i^*) \vert_\mathbb{L}) = f(x_i^*)$$ for every $i = 1, \ldots, n.$

Finally, we have to check that $\sup_{x \in \mathbb{M}} \sum_{i=1}^n |y_i^*(x)| \leq 1$.

Suppose not, and let $x \in M$ and $\varepsilon > 0$ such that $\sum_{i=1}^n |y_i^*(x)| > 1 + \varepsilon$. Suppose also that there exists $y \in \mathbb{L}$ with $y \leq x$ (the other case is analogous).

Since, in this case, $y_i^*(x) = \sup \set{x_i^*(y) : y \in \mathbb{L}, y \leq x}$ for every $i = 1, \ldots, n$, we have that there exists $\tilde{y_i} \in \mathbb{L}$, with $\tilde{y_i} \leq x$, such that $y_i^*(x) - x_i^*(\tilde{y_i}) < \frac{\varepsilon}{n}$. Now, if $y \in \mathbb{L}$ is such that $\tilde{y_i} \leq y \leq x$, since $x_i^*$ is increasing, we have that $x_i^*(\tilde{y_i}) \leq x_i^*(y)$. But then, $|x_i^*(y) - y_i^*(x)| = y_i^*(x) - x_i^*(y) \leq y_i^*(x) - x_i^*(\tilde{y_i}) < \frac{\varepsilon}{n}$.

Let $\tilde{y} := \max \set{\tilde{y_1}, \ldots, \tilde{y_n}} \in \mathbb{L}$. Due to the above, we have that $|x_i^*(\tilde{y}) - y_i^*(x)| < \frac{\varepsilon}{n}$ for every $i = 1, \ldots, n$. Then, using that $\sum_{i=1}^n|y_i^*(x)| > 1 + \varepsilon$ and $\sum_{i=1}^n |x_i^*(\tilde{y}) - y_i^*(x)| < \varepsilon$, we have that
\begin{eqnarray*} 
 \sum_{i=1}^n |x_i^*(\tilde{y})| & = &   \sum_{i=1}^n |x_i^*(\tilde{y}) - y_i^*(x) + y_i^*(x)| \\ 
 & \geq & \sum_{i=1}^n|y_i^*(x)| - \sum_{i=1}^n|x_i^*(\tilde{y}) - y_i^*(x)| \\
 & > & 1 + \varepsilon - \varepsilon = 1,
\end{eqnarray*}
which is a contradiction.
\end{proof}

We prove now Theorem \ref{FBLinearccc}. Endow $\mathbb{L}^\ast$ with the pointwise topology. If a function $f:\mathbb{L}^\ast\To \mathbb{R}$ belongs to $FBL\langle\mathbb{L}\rangle$, then it is continous. This is because the functions $\delta_x$ are continuous, and the property of being continous is preserved under all Banach lattice operations (including limits, because every limit in $FBL\langle\mathbb{L}\rangle$ is a uniform limit).

A basis for the topology of $\mathbb{L}^\ast$ is given by the sets of the form
$$U(x_1, I_1, \ldots, x_n, I_n) := \set{x^\ast \in \mathbb{L}^\ast : x^\ast(x_i) \in I_i \text{ for all } i=1,\ldots,n}.$$
for $x_1,\ldots,x_n\in\mathbb{L}$ and $I_1,\ldots,I_n$ open intervals with rational endpoints. Write $I_i<I_j$ if $\sup(I_i)<\inf(I_j)$, and consider the family $$\mathcal{W} = \set{ U(x_1, I_1, \ldots, x_n, I_n) : x_1<x_2<\cdots<x_n, I_1<I_2<\cdots<I_n }.$$
This is not a basis anymore. But since $\mathbb{L}^\ast$ consists of nondecreasing functions, it is clear that $\mathcal{W}$ is a $\pi$-basis. That means that every nonempty open subset of $\mathbb{L}^\ast$ contains a nonempty open subset from $\mathcal{W}$.

Let us suppose that $\mathbb{L}$ is a subset of the real line, and we prove that $FBL\langle\mathbb{L}\rangle$ is ccc. Let $D\subset\mathbb{L}$ be a countable dense subset of $\mathbb{L}$ that contains all element that are part of a leap, $D\supset\{a,b : [a,b] = \{a,b\} \}$. Observe that in this case
$$\mathcal{W}_0 = \set{ U(d_1, I_1, \ldots, d_n, I_n)\in\mathcal{W} : d_1,d_2,\cdots,d_n\in D}$$ 
is also a $\pi$-basis of $\mathbb{L}^\ast$. This is because for every $U(x_1, I_1, \ldots, x_n, I_n)\in\mathcal{W}$, we can interpolate $d_1^-\leq x_1\leq d_1^+\leq d_2^-\leq x_2\leq d_2^+ \leq \cdots \leq d_n^-\leq x_n \leq d^+_n$ with $d^\pm_k\in D$, and then
$$U(d^-_1, I_1,d^+_1,I_1, \ldots, d^-_n, I_n,d_n^+,I_n) \subset U(x_1, I_1, \ldots, x_n, I_n). $$
Take an uncountable family of positive elements $\mathcal{G}\subset FBL\langle \mathbb{L} \rangle$. For each $f\in \mathcal{G}$ there exists $V_f \in \mathcal{W}_0$ such that $V_f \subset  \{x^\ast\in\mathbb{L}^\ast : f(x^\ast)>0\}$. Notice that $f\wedge g \neq 0$ whenever $V_f\cap V_g\neq \emptyset$. Since $\mathcal{G}$ is uncountable and $\mathcal{W}_0$ is countable, there are plenty of pairs $f,g$ such that in fact $V_f = V_g$. This finishes the proof that $FBL\langle \mathbb{L} \rangle$ is ccc whenever $\mathbb{L}$ embeds in the real line.

We may notice that we proved a property stronger that the ccc: If a linear order $\mathbb{L}$ embeds into the real line, then $FBL\langle \mathbb{L} \rangle$ is $\sigma$-centered. That means, we can decompose the positive elements into countably many pieces in such a way that every finite infimum inside each piece is nonzero.

Now we turn to the proof that if $\mathbb{L}$ does not embed into the real line, then $FBL\langle \mathbb{L} \rangle$ is not ccc.  We are going to prove it first under the extra assumption that $\mathbb{L}$ has a maximum $M$ and a minimum $m$. We fix an uncountable family of triples $\mathcal{F}$ that fails property (3) in Lemma~\ref{reallines}. For every $i\in J$ consider
$$h_i = 0 \vee \left( \delta_{x_1^i}\wedge \left( \delta_{x_2^{i}} - \delta_{x_{1}^{i}} - 0.4\ \delta_M \right) \wedge \left( \delta_{x_3^{i}} - \delta_{x_{2}^{i}} - 0.4\ \delta_M \right)\right).$$ 
Let us see that these elements of $FBL\langle\mathbb{L}\rangle$ witness the failure of the ccc. Obviously $h_i\geq 0$. First, we fix $i$ and we check that  $h_i > 0$. For this, define $x^\ast : \mathbb{L} \longrightarrow [-1,1]$ by
\begin{equation*}
     x^\ast(x) = \left\{
	       \begin{array}{ll}
		 0.1      & \mathrm{if\ } x < x_2^i, \\
		 0.55 & \mathrm{if\ } x_2^i \leq x < x_{3}^i, \\
		 1 & \mathrm{if\ } x_3^i \leq x.\\
	       \end{array}
	     \right.
\end{equation*}
We have that $h_i(x^\ast) = 0 \vee\left(0.1\wedge \left( 0.55 -0.1 - 0.4 \right) \wedge \left( 1 - 0.55 - 0.4 \right)\right) = 0.05$, so $h_i\neq 0$.

Now, we prove that $h_i \wedge h_j = 0$ for $i \neq j$. Suppose on the contrary that $h_i \wedge h_j > 0$. Then, there exists $x^\ast \in \mathbb{L}^\ast$ such that $h_i(x^\ast) \wedge h_j(x^\ast) > 0$. Then  $$x^\ast(x_1^i)>0,\ x^\ast(x^j_1)>0,$$
$$x^\ast(x_2^i) - x^\ast(x_{1}^i) > 0.4\ x^\ast(M),$$
$$x^\ast(x_3^i) - x^\ast(x_{2}^i) > 0.4\ x^\ast(M),$$ $$x^\ast(x_2^j) - x^\ast(x_{1}^j) > 0.4\ x^\ast(M),$$ 
$$x^\ast(x_3^j) - x^\ast(x_{2}^j) > 0.4\ x^\ast(M).$$
Remember that property (3) of Lemma~\ref{reallines} fails, and therefore either $x_2^j\not\in [x_1^i,x_3^i]$ or $x_2^i\not\in [x_1^j,x_3^j]$. For example, say that $x_2^i < x_1^j$ (all other cases are analogous). Then, combining the fact that $x^\ast$ is nondecreasing with the above inequalities, we get that
\begin{eqnarray*}
x^\ast(M) &>& x^\ast(M)-x^\ast(x_1^i)\\ &\geq& x^\ast(x_3^j) - x^\ast(x_1^i)  \\
 & = & x^\ast(x_3^j) - x^\ast(x_2^j)
+ x^\ast(x_2^j) - x^\ast(x_1^j) + \\ & &  x^\ast(x_1^j) - x^\ast(x_2^i)
+ x^\ast(x_2^i) - x^\ast(x_1^i)\\
& > & 1.2 \ x^\ast(M),
\end{eqnarray*}
a contradicition because $x^\ast(M)\geq x^\ast(x^i_1)>0$.

The proof of the case when $\mathbb{L}$ has a maximum is over. Let $\overleftarrow{\mathbb{L}}$ be the linear order whose underlying set is the same as $\mathbb{L}$, but with the reverse order. It is easy to check that the map $\Phi : FBL\langle\mathbb{L}\rangle\To FBL\langle \overleftarrow{\mathbb{L}} \rangle$ given by $\Phi(f)(x^\ast) = -f(-x^\ast)$ is an isomorphism of Banach lattices with $\Phi(\delta_x) = \delta_x$ for all $x\in\mathbb{L}$. Thus, $FBL\langle\mathbb{L}\rangle$ and $FBL\langle \overleftarrow{\mathbb{L}} \rangle$ are isomorphic, so we will have that $\mathbb{L}$ embeds into the real line whenever $FBL\langle\mathbb{L}\rangle$ is ccc and $\mathbb{L}$ has a minimum. The case when $\mathbb{L}$ has neither a maximum nor a minimum remains. In that case, we just pick an arbitrary element $a\in\mathbb{L}$ and consider
$\mathbb{L}_1 = \{x\in\mathbb{L} : x\leq a\}$ and $\mathbb{L}_2 = \{x\in\mathbb{L} : x\geq a\}$. By Lemma~\ref{sublinear}, if $FBL\langle\mathbb{L}\rangle$ is ccc then both $FBL\langle\mathbb{L}_1\rangle$ and $FBL\langle\mathbb{L}_2\rangle$ are ccc. But $\mathbb{L}_1$ and $\mathbb{L}_2$ have a maximum and a minimum respectively, so by the cases that we already proved, we conclude that both $\mathbb{L}_1$ and $\mathbb{L}_2$ embed into the real line. This implies that $\mathbb{L}$ embeds into the real line, as required.

\section{Linear structure of a line in its free Banach lattice}\label{sectionsumming}

In this section, $\mathbb{L}$ is again a linearly ordered set, and $FBL\langle\mathbb{L}\rangle$ its free Banach lattice, in the form of Theorem~\ref{main}, with embedding $\phi:\mathbb{L}\To FBL\langle\mathbb{L}\rangle$ given by $\phi(x)=\delta_x$. We will show that in this case, the linear combinations of the copy of $\mathbb{L}$ inside $FBL\langle\mathbb{L}\rangle$ behave similarly to the summing basis of $c_0$. More precisely:

\begin{prop}
	Let $\mathbb{L}$ be a linearly ordered set. Then, for every $u_1 < \ldots < u_m \in \mathbb{L}$ and $a_1, \ldots, a_m \in \mathbb{R}$ we have that
	$$\left\| \sum_{i=1}^{m}a_i s_i\right\|_{\infty} \leq \left\| \sum_{i=1}^{m}a_i\delta_{u_i}\right\|_* \leq 6 \left\|\sum_{i=1}^{m}a_i s_i\right\|_{\infty},$$ where $s_i = (\underbrace{1,1,\ldots,1}_i,0,0,0,\ldots) \in c_0$.
	
\end{prop}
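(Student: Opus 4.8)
The plan is to understand both norms concretely. The quantity $\|\sum a_i s_i\|_\infty$ is simply $\max_{1\le k\le m}\big|\sum_{i\ge k} a_i\big|$, since the $k$-th coordinate of $\sum_i a_i s_i$ is $\sum_{i\ge k} a_i$. On the other side, recall that $\mathbb{L}^\ast$ consists of nondecreasing functions $x^\ast:\mathbb{L}\To[-1,1]$, so for $u_1<\cdots<u_m$ the values $t_i:=x^\ast(u_i)$ form a nondecreasing sequence in $[-1,1]$. Thus $\big(\sum_i a_i\delta_{u_i}\big)(x^\ast)=\sum_i a_i t_i$, and I would first record that the functional $f=\sum_i a_i\delta_{u_i}$ depends on $x^\ast$ only through the nondecreasing tuple $(t_1,\dots,t_m)$.

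For the \emph{lower} bound $\|\sum a_i s_i\|_\infty\le\|\sum a_i\delta_{u_i}\|_\ast$, I would exhibit, for the index $k$ achieving the max, a single point $x^\ast\in\mathbb{L}^\ast$ (so $n=1$ in the supremum defining $\|\cdot\|_\ast$, and the constraint $\sup_x|x^\ast(x)|\le1$ is automatic) witnessing the value $|\sum_{i\ge k}a_i|$. The natural choice is the step function $t_i = 0$ for $i<k$ and $t_i=1$ for $i\ge k$, extended to a nondecreasing $x^\ast$ on all of $\mathbb{L}$ (e.g. $x^\ast(x)=1$ if $x\ge u_k$, else $0$); then $f(x^\ast)=\sum_{i\ge k}a_i$, giving $|f(x^\ast)|=\|\sum a_i s_i\|_\infty$, which is a legitimate single-term sum in the supremum.

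For the \emph{upper} bound, the key is Abel summation. Writing $b_k=\sum_{i\ge k}a_i$ (so $\|\sum a_i s_i\|_\infty=\max_k|b_k|=:C$) and using $t_0:=0$, I would rearrange $\sum_i a_i t_i=\sum_{k=1}^m b_k(t_k-t_{k-1})$. Since the $t_k$ are nondecreasing in $[-1,1]$, the increments $t_k-t_{k-1}$ have a controlled total variation, which will let me bound a single evaluation $|f(x^\ast)|$ by a constant times $C$; to control the full sum $\sum_{j=1}^n|f(x_j^\ast)|$ under the constraint $\sup_{x\in\mathbb{L}}\sum_j|x_j^\ast(x)|\le1$, I would apply Abel summation simultaneously across the $j$, bounding $\sum_j|f(x_j^\ast)|\le\sum_k |b_k|\cdot\sum_j|t_k^{(j)}-t_{k-1}^{(j)}|$ and then estimating the inner variation sums against the normalization constraint evaluated at the points $u_1,\dots,u_m$. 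The factor $6$ should emerge from splitting positive and negative parts of each increment and handling the endpoints $t_0=0$ and $t_m$ (three or four telescoping pieces, each costing a factor bounded by $2$).

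The main obstacle I anticipate is the upper bound: a single evaluation is easy, but one must control an arbitrary finite sum $\sum_j|f(x_j^\ast)|$ subject to the pointwise constraint, and the Abel-summation bookkeeping across both indices $k$ and $j$ is where the constant $6$ (rather than something smaller) is lost. I would expect the cleanest route to be to fix the constraint at the finitely many points $u_1,\dots,u_m$ and reduce everything to a finite-dimensional estimate about nondecreasing sequences, where the interplay between $\max_k|b_k|$ and $\sup_x\sum_j|x_j^\ast(x)|$ can be made explicit.
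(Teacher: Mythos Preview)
Your outline is correct, but it diverges from the paper's argument in both halves and is in fact sharper in the upper bound.

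For the lower bound, the paper does not pick a single witness $x^\ast$. Instead it defines a bounded increasing map $T:\mathbb{L}\to c_0$ sending $u_i\mapsto s_i$, invokes the universal property of $FBL\langle\mathbb{L}\rangle$ to get $\hat T$ with $\|\hat T\|\le 1$, and reads off $\|\sum a_i s_i\|_\infty=\|\hat T(\sum a_i\delta_{u_i})\|_\infty\le\|\sum a_i\delta_{u_i}\|_\ast$. Your step-function witness is the same idea stripped down to one coordinate; it is more elementary and equally valid.

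For the upper bound the methods genuinely differ. The paper first observes that $\sum_j|f(x_j^\ast)|\le 2\sup\big|\sum_j f(x_j^\ast)\big|$ (splitting by the sign of $f(x_j^\ast)$), and since any finite sum $\sum_j x_j^\ast$ of elements of $\mathbb{L}^\ast$ satisfying the constraint is again in $\mathbb{L}^\ast$, this reduces the problem to a \emph{single} evaluation at the cost of a factor~$2$. Then, for one $x^\ast$, it identifies $(t_1,t_2-t_1,\ldots,t_m-t_{m-1})$ with an element of $3B_{\ell_1}$ dual to the summing basis, yielding a factor~$3$; hence $2\cdot 3=6$. Your route skips the reduction to a single $x^\ast$ and applies Abel summation across both indices at once. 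If you carry it out you will find you do \emph{not} need to split increments by sign: for $k\ge 2$ monotonicity gives $|t_k^{(j)}-t_{k-1}^{(j)}|=t_k^{(j)}-t_{k-1}^{(j)}$, so
\[
\sum_{j}\sum_{k=1}^m|t_k^{(j)}-t_{k-1}^{(j)}|=\sum_j\big(|t_1^{(j)}|+t_m^{(j)}-t_1^{(j)}\big)\le \sum_j|t_1^{(j)}|+\sum_j|t_m^{(j)}|+\sum_j|t_1^{(j)}|\le 3,
\]
using the constraint at $u_1$ and $u_m$ only. Hence your bound is $\|\sum a_i\delta_{u_i}\|_\ast\le 3\max_k|b_k|$, improving the paper's constant from $6$ to $3$. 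So your anticipated ``factor $6$ from splitting positive and negative parts'' is overly pessimistic; the direct computation is both simpler and tighter than you expected.
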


\begin{proof}
	Let $T: \mathbb{L} \longrightarrow c_0$ be the map given by
	\begin{equation*}
		T(x) = \left\{
		\begin{array}{ll}
			s_1      & \mathrm{if\ } x < u_2; \\
			s_k & \mathrm{if\ } u_k \leq x < u_{k+1} \text{ for any }k \geq 2. \\
		\end{array}
		\right.
	\end{equation*}
	Clearly, $T$ is a bounded and increasing map. Let $\hat{T}: FBL \langle \mathbb{L} \rangle \longrightarrow c_0$ be its extension as in Definition~\ref{FBLGBL}. Since $\| \hat{T} \| \leq 1$, we have that $\| \hat{T}(\sum_{i=1}^m a_i\delta_{u_i})\|_{\infty} \leq \| \sum_{i=1}^m a_i \delta_{u_i} \|_*$, where $\hat{T}(\sum_{i=1}^m a_i\delta_{u_i}) = \sum_{i=1}^m a_is_i$. This proves the first inequality in the proposition.

	For $f \in FBL_*\langle \mathbb{L} \rangle$ we have that 	
	\begin{eqnarray*} 
		\norm{f}_* & = & \sup \set{\sum_{j = 1}^n \abs{ f(x_{j}^{*})} : n \in \mathbb{N}, \text{ } x_1^{*}, \ldots, x_n^{*} \in \mathbb{L}^{*}, \text{ }\sup_{x \in \mathbb{L}} \sum_{j=1}^n \abs{x_j^{*}(x)} \leq 1 } \\
		&  \leq & 2\sup \set{\left| \sum_{j = 1}^n  f(x_{j}^{*})\right| : n \in \mathbb{N}, \text{ } x_1^{*}, \ldots, x_n^{*} \in \mathbb{L}^{*}, \text{ }\sup_{x \in \mathbb{L}} \sum_{j=1}^n \abs{x_j^{*}(x)} \leq 1 }.
	\end{eqnarray*}
	This is because $$\sum_{j=1}^n |f(x_j^*)| = \left|\sum_{f(x_j^*) > 0}f(x_j^*)\right| + \left|\sum_{f(x_j^*) < 0}f(x_j^*)\right|.$$
	Therefore 
		\begin{eqnarray*} 
		\left\| \sum_{i=1}^m a_i\delta_{u_i}\right\|_*
		&  \leq & 2\sup \set{ \left| \sum_{j = 1}^n \sum_{i=1}^m a_i x_j^*(u_i) \right| : n \in \mathbb{N}, \text{ } x_1^{*}, \ldots, x_n^{*} \in \mathbb{L}^{*}, \text{ }\sup_{x \in \mathbb{L}} \sum_{j=1}^n \abs{x_j^{*}(x)} \leq 1 } \\
		& = & 2\sup \set{ \left| \sum_{i = 1}^m a_i (\sum_{j=1}^n x_j^*)(u_i) \right| : n \in \mathbb{N}, \text{ } x_1^{*}, \ldots, x_n^{*} \in \mathbb{L}^{*}, \text{ }\sup_{x \in \mathbb{L}} \sum_{j=1}^n \abs{x_j^{*}(x)} \leq 1 }\\
		& \leq & 2\sup \set{\left| \sum_{i=1}^m a_i x^*(u_i)\right| : x^*  \in \mathbb{L}^*}.
	\end{eqnarray*}
	
	On the other hand,
	$$ 3\left\| \sum_{i=1}^m a_is_i \right\|_{\infty}  =  \sup \set{\left| z^*\left(\sum_{i=1}^m a_i s_i\right) \right| : z^* \in 3B_{{c_0}^*}} = \sup \set{\left| \sum_{i=1}^m a_iz^*(s_i) \right| : z^* \in 3B_{\ell_1}}.$$
	
	Given $x^* \in \mathbb{L}^*$, if we define $z_1 = x^*(u_1)$ and $z_k = x^*(u_k) - x^*(u_{k-1})$ for every $k \geq 2$, then $z^* = (z_1,z_2,z_3,\ldots) \in 3B_{\ell_1}$, and $z^\ast(s_i) = x^\ast(u_i)$ for all $i=1,\ldots,m$. Combining all these facts, we get the second inequality in the proposition.	
\end{proof}

\end{document}